\newcommand*{\NN}{\mathbb{N}}
\newcommand*{\ZZ}{\mathbb{Z}}
\DeclareMathOperator{\im}{im}
\DeclareMathOperator{\id}{id}
\DeclareMathOperator{\Hom}{Hom}
\DeclareMathOperator{\Nat}{Nat}
\DeclareMathOperator{\Tor}{Tor}
\DeclareMathOperator{\Ext}{Ext}
\DeclareMathOperator{\Soc}{soc}
\DeclareMathOperator{\Topp}{top}
\DeclareMathOperator{\spec}{spec}
\newcommand*{\grRMod}{\text{gr}R\text{-}\mb{Mod}}
\newcommand*{\PMod}{\mb{PMod}}
\newcommand*{\Vect}{\mathbf{Vect}}
\newcommand*{\mb}[1]{\mathbf{#1}}
\newcommand*{\mf}[1]{\mathfrak{#1}}
\newcommand*{\abs}[1]{\left\lvert#1\right\rvert}
\newcommand*{\kk}{\mathbf{k}}
\numberwithin{equation}{subsection}
\newtheorem{thm}[equation]{Theorem}
\newtheorem{prop}[equation]{Proposition}
\newtheorem{lemma}[equation]{Lemma}
\newtheorem{cor}[equation]{Corollary}
\theoremstyle{definition}
\newtheorem{defn}[equation]{Definition}
\newtheorem{ex}[equation]{Example}
\theoremstyle{remark}
\newtheorem{rem}[equation]{Remark}
\title{Flat covers and injective hulls of persistence modules}
\author{Eero Hyry\thanks{eero.hyry@tuni.fi}}
\author{Ville Puuska\thanks{puuskaville@gmail.com}}
\affil{Tampere University}
\begin{document}

\maketitle

\begin{abstract}
Motivated by recent progress in topological data analysis, we establish a Matlis duality between injective hulls and flat covers of persistence modules. This extends to a duality between minimal flat and minimal injective resolutions. We utilize the theory of flat cotorsion modules and flat covers developed by Enochs and Xu. By means of this theory we can work with persistence modules which are not tame or even pointwise finite-dimensional.

\end{abstract}


\section*{Introduction}
\label{sec: Introduction}
This article is motivated by recent progress in topological data analysis. Topological data analysis is a recent field of mathematics, which aims to study the shape of data. One of the main methods of topological data analysis is persistent homology. In persistent homology, one first constructs a filtration of a given topological space\textemdash an increasing family of subspaces, indexed by a poset. The homological properties which “persist” along the filtration are considered to be important.  By taking (co)homology with coefficients in a field \(\kk\), one obtains a diagram of vector spaces and linear maps that is called a (co)persistence module. Note that homology and cohomology are dual to each other as \(\kk\)-vector spaces. More formally, a persistence module is a covariant and a copersistence module a contravariant functor from the poset interpreted as a category to the category of \(\kk\)-vector spaces. 

In the following we will assume that the poset is \(\ZZ^n\). From the point of view of graded algebra a persistence module \(M\) then corresponds to a \(\ZZ^n\)-graded module over the the polynomial ring \(\kk[x_1,\ldots, x_n]\). We also identify copersistence modules with persistence modules obtained by "flipping degrees", i.e. a copersistence module \(N\) is identified with the \(\ZZ^n\)-graded module \(\bigoplus_{\mb a \in \ZZ^n} N_{-\mb a}\). Under this identification, the persistent cohomology of a filtered space becomes equal to the Matlis dual of its persistent homology: if we denote the persistent homology by \(M\), then the persistent cohomology is
\[M^\vee = \bigoplus_{\mb a \in \ZZ^n} \Hom_\kk(M_{-\mb a}, \kk).\]

In the case \(n = 1\) every finitely generated persistence module can be uniquely written as a direct sum of the so-called “interval modules”. The ends of an interval describe the ends of a “bar”, and can be seen as the “birth” and “death” of some topological feature. These bars yield the “barcode” of the persistence module, which is a complete and discrete invariant. Unfortunately, this does not apply to multipersistence: if \(n > 1\), an invariant like the barcode does not exist. A promising direction of research to get around the lack of the barcode has been to study minimal presentations and resolutions; typically free ones. A minimal free resolution exists if \(M\) is finitely generated, which is the case in many applications of persistent homology. However, in topological data analysis it is important to be able to consider non-finitely generated persistence modules, too. Note in particular that the Matlis dual of a finitely generated module need not be finitely generated.

The need to deal with non-finitely generated persistence modules lead Miller to consider flat covers and flat resolutions instead of the free ones. He also realized that deaths should be viewed as dual to births (see \cite[Section 1.4]{miller20b}): births correspond to flat covers and deaths to injective hulls. It is therefore important to consider injective hulls and injective resolutions, too. Injective resolutions have recently been proven to be useful beyond mere theory in \cite{bauer23} where dualities between minimal free and injective resolutions and between persistent homology and cohomology were leveraged to develop an algorithm for computing minimal free resolutions of persistent homology.

In this article, we expand Miller’s idea to better understand flat covers and injective hulls of persistence modules, including persistence modules which are not tame or even pointwise finite-dimensional. General flat persistence modules and flat covers of arbitrary persistence modules are sometimes too difficult to work with. These difficulties can often be avoided by assuming that the persistence modules in question are cotorsion. To this purpose, we adapt the theory of flat cotorsion modules and flat covers developed by Enochs and Xu \cite{enochs84, xu96, enochs97} to the setting of persistence modules. Importantly, Matlis duals of persistence modules are cotorsion. This means in particular that persistent cohomology is always cotorsion. Moreover, all pointwise finite-dimensional persistence modules are cotorsion.

The theory of flat cotorsion modules can be considered as a sort of dual to the theory of injective modules. One key example of this duality is Enochs’ decomposition theorem of flat cotorsion modules over a Noetherian ring \cite[Theorem]{enochs84}. This theorem shows that flat cotorsion modules have unique decompositions similar to the unique decompositions of injective modules discovered by Matlis. We prove a persistence module version of the decomposition theorem in Theorem \ref{thm: flat cotorsion structure}. The minimal flat resolution of a cotorsion persistence module consists of flat cotorsion modules so this decomposition theorem gives us an interpretable structure for the minimal flat resolution.

The main result of this article is a Matlis duality between injective hulls and flat covers, Theorem \ref{thm: matlis duality of injective hulls and flat covers}, which states that a morphism \(f \colon M \to N\) is an injective hull, if and only if its Matlis dual \(f^\vee \colon N^\vee \to M^\vee\) is a flat cover. Dually, if \(M\) is assumed to be pointwise finite-dimensional, then \(g \colon M \to N\) is a flat cover, if and only if \(g^\vee\) is an injective hull. As a corollary we obtain a similar duality between minimal flat resolutions and minimal injective resolutions. In the 
single parameter case Theorem \ref{thm: matlis duality of injective hulls and flat covers} generalizes the fact that the barcodes of persistent homology and cohomology are equal, since the barcode is equivalent to the composition of the flat cover and the injective hull. This composition is called a flange presentation in \cite{miller20b}.

\section*{Acknowledgements}
\label{sec: Acknowledgements}
The second author was supported by grants from the Jenny and Antti Wihuri Foundation, the Alfred Kordelin Foundation, and the Vilho, Yrjö and Kalle Väisälä Foundation.

\section{Preliminaries}
\label{sec: Preliminaries}
Throughout this paper, we fix \(n \in \NN\), a field \(\kk\), and \(R = \kk[x_1, \dots, x_n]\). We consider \(R\) to be \(\ZZ^n\)-graded with \(\deg x_i = \mb e_i\) for each \(i\), where \(\mb e_1, \dots, \mb e_n \in \ZZ^n\) is the standard basis.

A \emph{persistence module} is a \(\ZZ^n\)-graded \(R\)-module. For a persistence module \(M\) and \(\mb a \in \ZZ^n\) we will denote the vector space of degree \(\mb a\) homogeneous elements of \(M\) by \(M_\mb a\). We use the notation \(M(\mb a)\) to denote the \emph{degree \(\mb a\) shift of \(M\)}, i.e.~\(M(\mb a) = \bigoplus_{\mb d \in \ZZ^n} M_{\mb a+\mb d}\).

A morphism of persistence modules is an \(R\)-morphism \(f \colon M \to N\) that preserves degrees, i.e.~\(f(M_\mb d) \subseteq N_\mb d\) for all \(\mb d \in \ZZ^n\). We denote the set of morphisms from \(M\) to \(N\) by \(\Nat(M,N)\) and we denote the category of persistence modules by \(\PMod\).

It is well known that persistence modules are equivalently functors \(\ZZ^n \to \Vect\), where \(\Vect\) denotes the category of \(\kk\)-vector spaces. We will use this equivalence throughout, but by a persistence module we will always refer to a \(\ZZ^n\)-graded \(R\)-module.

For a family of persistence modules \((M_i)_{i \in \Lambda}\), the direct sum and direct product are given by
\[\bigoplus_{i \in \Lambda} M_i = \bigoplus_{\mb a \in \ZZ^n} \bigoplus_{i \in \Lambda} (M_i)_\mb a \text{ and } \prod_{i \in \Lambda} M_i = \bigoplus_{\mb a \in \ZZ^n} \prod_{i \in \Lambda} (M_i)_\mb a\]
respectively. Note that while the direct sum of persistence modules is equal to the direct sum of non-graded \(R\)-modules, the direct product of persistence modules can be a strict subset of the direct product of non-graded \(R\)-modules. If the family is constant, i.e.~\(M_i = M\) for all \(i \in \Lambda\), then we denote
\[M^{(X)} := \bigoplus_{i \in \Lambda} M \text{ and } M^X := \prod_{i \in \Lambda} M.\]

For persistence modules \(M\) and \(N\), the tensor product of them as non-graded \(R\)-modules is naturally graded by setting \(\deg m \otimes n = \deg m + \deg n\) for homogeneous elements \(m \in M\) and \(n \in N\). We define the \emph{graded \(\Hom\)} between \(M\) and \(N\) to be the persistence module
\[\underline{\Hom}(M, N) := \bigoplus_{\mb a \in \ZZ^n} \Nat(M, N(\mb a)).\]
Note that a morphism \(f \colon M \to N(\mb a)\) is equvalently an \(R\)-morphism \(M \to N\) such that \(f(M_\mb d) \subseteq N_{\mb a+\mb d}\) for all \(\mb d \in \ZZ^n\). With this we see that \(\underline{\Hom}(M, N)\) is the submodule of the \(R\)-module \(\Hom(M, N)\) generated by all such \(R\)-morphisms.

The usual adjunction between \(\otimes\) and \(\Hom\) induces natural isomorphisms
\[\underline{\Hom}(M \otimes N, L) \cong \underline{\Hom}(M, \underline{\Hom}(N, L))\]
for all persistence modules \(M\), \(N\), and \(L\).

Since the tensor product is naturally graded, its derived functors \(\Tor_i\) are graded as well. We will denote the right derived functors of \(\underline{\Hom}\) by \(\underline{\Ext}^i\). Note that since our ring has global dimension \(n\) and graded projective dimension is equal to non-graded projective dimension, we have
\[\Tor_i(-, -) = \underline{\Ext}^i(-, -) = 0\]
whenever \(i > n\).

A persistence module \(M\) is \emph{flat} if the functor \(M \otimes - \colon \PMod \to \PMod\) is exact. It is actually equivalent for a persistence module to be flat as a persistence module and flat as a non-graded \(R\)-module (see e.g.~\cite[Proposition 3.1]{herrmann82}).

A persistence module \(M\) is \emph{projective} (resp.~\emph{injective}), if and only if the functor \(\underline{\Hom}(M, -)\) (resp.~\(\underline{\Hom}(-, M)\)) is exact. Note that it is equivalent for a persistence module to be projective as a persistence module and as a non-graded \(R\)-module. However, this is not the case for injectives. Whenever we call a persistence module injective, we mean injective in the category of persistence modules.

A persistence module \(M\) if \emph{free} if \(M \cong \bigoplus_{i \in \Lambda} R(\mb a_i)\). In fact, projective persistence modules are precisely the free persistence modules (see \cite[Proposition 5]{hoppner81}).

For a persistence module \(M\), the \emph{injective hull of \(M\)} is an injective persistence module \(E\) such that \(M \subseteq E\) and for any submodule \(0 \neq E' \subseteq E\) we have \(M \cap E' \neq 0\). Every persistence module has an injective hull and injective hulls are unique up to isomorphism. We will denote the injective hull of \(M\) by \(E(M)\). A resolution \(0 \to M \xrightarrow{d^{-1}} E^0 \xrightarrow{d^0} E^1 \xrightarrow{d^1} \cdots\) is the \emph{minimal injective resolution of \(M\)} if \(\im d^{i-1} \subseteq E^i\) is the injective hull for every \(i \in \NN\), i.e.~the minimal injective resolution is obtained by chaining injective hulls.

Throughout this paper we will mostly use faces of \(\NN^n\) instead of homogeneous primes of \(R\). A \emph{face} of \(\NN^n\) is a submonoid \(\sigma \subseteq \NN^n\) that is of the form \(\sigma = \langle\mb e_{i_1},\dots,\mb e_{i_k}\rangle\), i.e.~the submonoid generated by \(\mb e_{i_1}, \dots, \mb e_{i_k}\). We denote the \emph{face perpendicular to \(\sigma\)}
\[\sigma^\perp := \langle \mb e_i \mid \mb e_i \not\in \sigma \rangle.\]
We will often denote the face \(\{0\}\) by simply \(0\).

For each face \(\sigma\), we define the corresponding prime
\[\mf p_\sigma := \langle x_i \mid \mb e_i \not\in \sigma\rangle,\]
and for each prime \(\mf p \subseteq R\), we define the corresponding face
\[\sigma_\mf p := \langle \mb e_i \mid x_i \not\in \mf p\rangle.\]
These operations give us an order reversing bijection between the faces of \(\NN^n\) and the homogeneous primes of \(R\) since the homogeneous primes of \(R\) are precisely the ideals generated by a subset of the variables \(x_i\) and thus the operations are mutually inverse.

Let \(\sigma\) be a face. \emph{Localization along} \(\sigma\) is the functor
\[(-)_{\sigma} := (-)_{\mf p_\sigma}.\]
Note that this localization is \emph{homogeneous localization}, i.e.~we invert only the homogeneous elements of \(R \setminus \mf p_\sigma\). Dually, \emph{colocalization along} \(\sigma\) is the functor
\[(-)^{\sigma} := (-)^{\mf p_\sigma} = \underline{\Hom}(R_\sigma,-).\]
We will also use the following limit-constructions for localization and colocalization:
\[\varphi \colon \bigoplus_{\mb d \in \ZZ^n} \varinjlim_{\mb q \in \mb d + \sigma} M_\mb q \to M_\sigma \text{ and } \psi \colon M^\sigma \to \bigoplus_{\mb d \in \ZZ^n} \varprojlim_{\mb q \in \mb d - \sigma} M_\mb q.\]
Note that unlike localization, colocalization is not exact. More specifically, colocalization is only left exact.

For an important example, let \(\sigma\) and \(\tau\) be faces and \(F\) a free persistence module. Now,
\[(F_\tau)^\sigma = \begin{cases} F_\tau, & \text{if } \sigma \subseteq \tau; \\ 0, & \text{if } \sigma \subsetneq \tau. \end{cases}\]

For a face \(\sigma\), we denote
\[\kk(\sigma) := R_\sigma/\mf p_\sigma R_\sigma.\]

For every face \(\sigma\) and \(\mb a \in \ZZ^n\), we define the corresponding upset
\[U_{\mb a,\sigma} := \mb a + \NN^n - \sigma\]
and downset
\[D_{\mb a,\sigma} := \mb a - \NN^n + \sigma.\]
In other words, \(U_{\mb a,\sigma} \subseteq \ZZ^n\) is the smallest upset that contains \(\mb a\) and \(U_{\mb a,\sigma} - \sigma = U_{\mb a,\sigma}\). Similarly, \(D_{\mb a,\sigma} \subseteq \ZZ^n\) is the smallest downset that contains \(\mb a\) and \(D_{\mb a,\sigma} + \sigma = D_{\mb a,\sigma}\).

If \(X \subseteq \ZZ^n\) is a convex subset, we denote \(\kk[X] = \bigoplus_{\mb a \in X} \kk\) and call this the \emph{indicator persistence module over \(X\)}. To be more clear, this is the persistence module corresponding to the functor \(\mb a \mapsto \begin{cases} \kk,& \text{ if } \mb a \in X;\\ 0,& \text{otherwise.} \end{cases}\)

For example, with this notation we have
\[R_\sigma(-\mb a) = \kk[D_{\mb a, \sigma}] \text{ and } \kk(\sigma)(-\mb a) = \kk[\mb a + \sigma - \sigma].\]

\subsection{Matlis duality}\label{subsec: Matlis duality}

The injective hull of \(\kk = R/\langle x_1,\dots x_n \rangle\) is \(E(\kk) = \kk[D_{0,0}]\).

\begin{defn}
The \emph{Matlis duality} functor is the contravariant functor
\[M \mapsto M^\vee := \underline{\Hom}(M, E(\kk)).\]
\end{defn}

The Matlis duality functor is isomorphic to the functor
\[M \mapsto \bigoplus_{\mb a \in \ZZ^n} \Hom_\kk(M_{-\mb a},\kk).\]

Obviously the Matlis duality functor is exact. Note that it is also faithful, i.e.~for a morphism \(\varphi\) we have \(\varphi = 0\), if and only if \(\varphi^\vee = 0\). Additive faithful functors also reflect exact sequences so we get that the Matlis duality functor preserves and reflects exact sequences.

\begin{ex}\label{ex: duals of upset and downset modules}
From the isomorphism \(M^\vee \cong \bigoplus_{\mb a \in \ZZ^n} \Hom_\kk(M_{-\mb a},\kk)\) we see that for any convex subset \(X \subseteq \ZZ^n\), \(\kk[X]^\vee \cong \kk[-X]\). Importantly,
\[\kk[U_{\mb a,\sigma}]^\vee \cong \kk[D_{-\mb a,\sigma}]\]
and
\[\kk[D_{\mb a,\sigma}]^\vee \cong \kk[U_{-\mb a,\sigma}]\]
for all \(\mb a \in \ZZ^n\) and all faces \(\sigma\).
\end{ex}

\begin{lemma}\label{lem: pfd iff isomorphic to double Matlis dual}
A persistence module \(M\) is pointwise finite-dimensional, if and only if the natural morphism \(M \to (M^\vee)^\vee\) is an isomorphism.
\end{lemma}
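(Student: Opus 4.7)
The plan is to reduce everything to the classical linear-algebra fact that for a $\kk$-vector space $V$, the canonical map $V \to V^{**}$ is an isomorphism if and only if $\dim_\kk V < \infty$. The argument is essentially a degreewise computation using the explicit description of Matlis duality provided just before the lemma.

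First I would unwind the definitions degree by degree. By the natural isomorphism $M^\vee \cong \bigoplus_{\mb b \in \ZZ^n} \Hom_\kk(M_{-\mb b}, \kk)$, we have $(M^\vee)_{-\mb a} = \Hom_\kk(M_\mb a, \kk)$, and hence
\[((M^\vee)^\vee)_\mb a = \Hom_\kk((M^\vee)_{-\mb a}, \kk) = \Hom_\kk(\Hom_\kk(M_\mb a, \kk), \kk) = (M_\mb a)^{**}.\]
The next step is to check that, under this identification, the natural morphism $\eta_M \colon M \to (M^\vee)^\vee$ restricts in degree $\mb a$ to the canonical evaluation map $\mathrm{ev} \colon M_\mb a \to (M_\mb a)^{**}$ sending $m$ to $(\varphi \mapsto \varphi(m))$. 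This is routine: the natural morphism $\eta_M$ is the unit of the (contravariant) adjunction $\underline{\Hom}(-, E(\kk))$ with itself, or equivalently is given by $m \mapsto (f \mapsto f(m))$ on homogeneous elements, and the chain of identifications above just reads off the degree-$\mb a$ component.

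With that identification in place, the equivalence is immediate from linear algebra. If $M$ is pointwise finite-dimensional, then each $\mathrm{ev} \colon M_\mb a \to (M_\mb a)^{**}$ is an isomorphism, so $\eta_M$ is an isomorphism in every degree and hence an isomorphism of persistence modules. Conversely, if $\eta_M$ is an isomorphism, then for every $\mb a \in \ZZ^n$ the evaluation map $M_\mb a \to (M_\mb a)^{**}$ is an isomorphism, which forces $\dim_\kk M_\mb a < \infty$, since any infinite-dimensional vector space has strictly smaller cardinality than its double dual.

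The only mildly delicate point is the second step: verifying cleanly that $\eta_M$ really does restrict degreewise to the standard evaluation map under the identifications $(M^\vee)_{-\mb a} = \Hom_\kk(M_\mb a, \kk)$ and $((M^\vee)^\vee)_\mb a = (M_\mb a)^{**}$. Everything else is a one-line invocation of the standard fact about double duals of vector spaces.
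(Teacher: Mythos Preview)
Your proposal is correct and follows essentially the same approach as the paper: both reduce to showing that in each degree $\mb a$ the natural map $M_\mb a \to ((M^\vee)^\vee)_\mb a$ is the canonical evaluation into the double dual $\Hom_\kk(\Hom_\kk(M_\mb a,\kk),\kk)$, and then invoke the standard linear-algebra fact. The paper's proof is simply a terser version of yours, asserting in one line what you spell out in detail.
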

\begin{proof}
For any \(\mb a \in \ZZ^n\), the linear map \(M_\mb a \to (M^\vee)^\vee_\mb a\) is simply the embedding of the vector space \(M_\mb a\) to the double dual \(\Hom_\kk(\Hom_\kk(M_\mb a,\kk),\kk)\). These are isomorphisms, if and only if \(M\) is pointwise finite-dimensional.
\end{proof}

Next, we will show a duality between flat and injective persistence modules given by the Matlis duality functor. Our proof is simply a graded version of \cite[Theorem 3.2.16]{enochs11}. First, we will need the following lemma, which is a graded version of \cite[Theorem 3.2.11]{enochs11}. The proof in the non-graded setting works for persistence modules as well so we omit it.

\begin{lemma}\label{lem: tensor product of hom isom to double hom}
For all persistence modules \(M\), \(N\) and \(L\), we have a natural morphism
\[\tau_{M,N,L} \colon M \otimes \underline{\Hom}(N,L) \to \underline{\Hom}(\underline{\Hom}(M,N),L).\]
If \(M\) is finitely generated and \(L\) is injective, the morphism \(\tau_{M,N,L}\) is an isomorphism.
\end{lemma}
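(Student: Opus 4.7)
The plan is to construct $\tau$ explicitly, then reduce the isomorphism claim to finitely generated free modules via a presentation argument—the standard strategy from the non-graded setting.

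First, I would define $\tau_{M,N,L}$ on pure tensors: for homogeneous $m \in M$ and $f \in \underline{\Hom}(N,L)$, let $\tau(m \otimes f)$ be the morphism sending each homogeneous $\phi \in \underline{\Hom}(M,N)$ to $f(\phi(m)) \in L$. Bilinearity on the tensor product, the correct degree $\deg m + \deg f$, and naturality in all three variables are routine to verify; the degree bookkeeping parallels the non-graded argument.

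For the isomorphism claim, I would first handle $M = R(\mb a)$. The identifications $\underline{\Hom}(R(\mb a), N) \cong N(-\mb a)$ and $R(\mb a) \otimes - \cong (-)(\mb a)$ reduce both sides of $\tau$ to $\underline{\Hom}(N,L)(\mb a)$, with $\tau$ becoming the identity. Since both the source and target functors in $M$ commute with finite direct sums, this extends to all finitely generated free persistence modules. Now let $M$ be any finitely generated persistence module. Because $R$ is Noetherian, $M$ is finitely presented; choose a presentation $F_1 \to F_0 \to M \to 0$ with $F_0$, $F_1$ finitely generated free. Applying the right-exact functor $- \otimes \underline{\Hom}(N,L)$ yields an exact top row, while applying the left-exact contravariant $\underline{\Hom}(-,N)$ followed by the exact contravariant $\underline{\Hom}(-,L)$—exact because $L$ is injective—produces an exact bottom row
\[\underline{\Hom}(\underline{\Hom}(F_1,N), L) \to \underline{\Hom}(\underline{\Hom}(F_0,N), L) \to \underline{\Hom}(\underline{\Hom}(M,N), L) \to 0.\]
By naturality, the two rows assemble into a commutative diagram with vertical arrows $\tau$. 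The two leftmost $\tau$'s are isomorphisms by the free case, so the four-lemma forces $\tau_{M,N,L}$ to be an isomorphism as well.

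The only genuine subtlety is producing right exactness of the bottom row from a merely left exact input, which is precisely where the injectivity hypothesis on $L$ is essential. The remainder of the argument is formal and mirrors the non-graded proof in \cite[Theorem 3.2.11]{enochs11}.
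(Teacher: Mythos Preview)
Your proposal is correct and is exactly the standard argument from \cite[Theorem 3.2.11]{enochs11} adapted to the graded setting. The paper itself omits the proof entirely, stating only that the non-graded proof carries over, so you have simply written out what the paper left implicit; there is no alternative approach to compare against.
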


\begin{prop}\label{prop: flat iff dual inj}
A persistence module \(M\) is flat, if and only if \(M^\vee\) is injective, and \(M\) is injective, if and only if \(M^\vee\) is flat.
\end{prop}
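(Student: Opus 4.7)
The plan is to exploit the tensor--Hom adjunction with \(L = E(\kk)\), which yields a natural isomorphism
\[\underline{\Hom}(N, M^\vee) = \underline{\Hom}(N, \underline{\Hom}(M, E(\kk))) \cong \underline{\Hom}(N \otimes M, E(\kk)) = (N \otimes M)^\vee\]
for all persistence modules \(M\) and \(N\). Combined with the already noted fact that Matlis duality is exact and faithful, and hence both preserves and reflects exact sequences, this immediately gives the first equivalence: \(M^\vee\) is injective iff \(\underline{\Hom}(-, M^\vee)\) is exact iff \((- \otimes M)^\vee\) is exact iff \(- \otimes M\) is exact iff \(M\) is flat.

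For the second equivalence I would instead invoke Lemma \ref{lem: tensor product of hom isom to double hom} with \(L = E(\kk)\). This gives, for every \emph{finitely generated} persistence module \(N\), a natural isomorphism
\[N \otimes M^\vee \cong \underline{\Hom}(N, M)^\vee.\]
Since \(R\) is Noetherian, the graded Baer criterion reduces injectivity of \(M\) to exactness of \(\underline{\Hom}(-, M)\) on the sequences \(0 \to I \to R \to R/I \to 0\) with \(I\) a finitely generated homogeneous ideal, while the Noetherian flatness criterion reduces flatness of \(M^\vee\) to the vanishing \(\Tor_1(R/I, M^\vee) = 0\) on the same class of sequences, i.e.\ exactness of \(- \otimes M^\vee\) on them. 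The displayed isomorphism, together with the exactness and faithfulness of Matlis duality, identifies these two exactness conditions, yielding \(M\) injective iff \(M^\vee\) flat.

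The main subtlety is the second equivalence: it cannot simply be obtained by dualizing the first, since by Lemma \ref{lem: pfd iff isomorphic to double Matlis dual} the natural map \(M \to (M^\vee)^\vee\) need not be an isomorphism when \(M\) is not pointwise finite-dimensional. This is what forces the argument through the finitely generated case of Lemma \ref{lem: tensor product of hom isom to double hom}, and requires invoking both Baer's criterion and the Noetherian criterion for flatness to reduce the global exactness conditions to the finitely generated situation where the tensor--Hom isomorphism is available.
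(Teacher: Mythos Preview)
Your argument is correct and uses the same core ingredients as the paper: the first equivalence is handled identically via the tensor--Hom adjunction, and for the second both you and the paper pass through Lemma~\ref{lem: tensor product of hom isom to double hom} (with \(L=E(\kk)\)) and the graded Baer criterion to reduce to finitely generated test modules. The one genuine difference is in the direction ``\(M\) injective \(\Rightarrow\) \(M^\vee\) flat'': after establishing that \(0 \to I \otimes M^\vee \to R \otimes M^\vee\) is exact for every homogeneous ideal \(I\), the paper does \emph{not} invoke an ideal-theoretic flatness criterion but instead dualizes once more, applies Baer to conclude that \((M^\vee)^\vee\) is injective, and then feeds this back into the already-proven first equivalence to get that \(M^\vee\) is flat. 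Your route via the graded flatness criterion is shorter and avoids this double-dual detour; the paper's route has the minor bookkeeping advantage that it only ever appeals to Baer's criterion (which it states explicitly), so it never needs to justify a graded version of the flatness-via-ideals test.
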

\begin{proof}
The first equivalence was proven e.g.~in \cite[Lemma 11.23]{miller05}: \(M\) is flat, if and only if \(- \otimes M\) is exact. Since the Matlis duality functor preserves and reflects exact sequences, \(- \otimes M\) is exact, if and only if
\[(- \otimes M)^\vee = \underline{\Hom}(- \otimes M,E(\kk))\]
is exact. The adjunction between \(\otimes\) and \(\underline{\Hom}\) gives us an isomorphism of functors
\[\underline{\Hom}(- \otimes M,E(\kk)) \cong \underline{\Hom}(-,\underline{\Hom}(M,E(\kk)) = \underline{\Hom}(-,M^\vee).\]
Hence, \((- \otimes M)^\vee\) is exact, if and only if \(M^\vee\) is injective.

For the second equivalence, we will use the following graded version of Baer's criterion: a persistence module \(E\) is injective, if and only if for all \emph{homogeneous} ideals \(I \subseteq R\), the morphism \(\underline{\Hom}(R,E) \to \underline{\Hom}(I,E)\) is an epimorphism.

Assume that \(M\) is injective and let \(I\) be a homogeneous ideal. By the previous lemma, we have a commutative diagram
\[\begin{tikzcd}[ampersand replacement=\&]
0 \rar \& I \otimes M^\vee \rar \dar{\cong} \& R \otimes M^\vee \dar{\cong} \\
0 \rar \& \underline{\Hom}(I,M)^\vee \rar \& \underline{\Hom}(R,M)^\vee
\end{tikzcd}\]
Since \(M\) is injective and the Matlis duality functor is exact, the bottom row is exact. Hence, the top row is exact. By taking the Matlis dual of the top row, we get a commutative diagram
\[\begin{tikzcd}[ampersand replacement=\&]
\underline{\Hom}(R \otimes M^\vee,E(\kk)) \rar \dar{\cong} \& \underline{\Hom}(I \otimes M^\vee,E(\kk)) \rar \dar{\cong} \& 0 \\
\underline{\Hom}(R,(M^\vee)^\vee) \rar \& \underline{\Hom}(I,(M^\vee)^\vee) \rar \& 0
\end{tikzcd}\]
where the isomorphisms are again given by the adjunction between \(\otimes\) and \(\underline{\Hom}\). Since the top row is exact, the bottom row is exact as well. By Baer's criterion, \((M^\vee)^\vee\) is injective, and so \(M^\vee\) is flat by the first equivalence.

Now, assume that \(M^\vee\) is flat. For any homogeneous ideal \(I\), we again have a commutative diagram
\[\begin{tikzcd}[ampersand replacement=\&]
0 \rar \& I \otimes M^\vee \rar \dar{\cong} \& R \otimes M^\vee \dar{\cong} \\
0 \rar \& \underline{\Hom}(I,M)^\vee \rar \& \underline{\Hom}(R,M)^\vee
\end{tikzcd}\]
This time the top row is exact since \(M^\vee\) is flat, so the bottom row is also exact. Since the Matlis duality functor reflects exact sequences,
\[\underline{\Hom}(R,M) \to \underline{\Hom}(I,M) \to 0\]
is exact. By the graded version of Baer's criterion, \(M\) is injective.
\end{proof}

For any face \(\sigma\) and \(\mb a \in \ZZ^n\), the persistence modules \(R_\sigma(-\mb a) = \kk[U_{\mb a, \sigma}]\) are flat as they are localizations of free modules. Hence, the persistence modules \(\kk[D_{\mb a, \sigma}] \cong \kk[U_{-\mb a, \sigma}]^\vee\) are injective. The embeddings \(R/\mf p_{\sigma} = \kk[\sigma] \hookrightarrow \kk[D_{0, \sigma}]\) being clearly essential show that
\[E(R/\mf p_\sigma)(-\mb a) \cong \kk[D_{\mb a,\sigma}].\]
Further, we get the isomorphisms
\[R_\sigma(-\mb a)^\vee \cong E(R/\mf p_\sigma)(\mb a)\]
and
\[E(R/\mf p_\sigma)(-\mb a)^\vee \cong R_\sigma(\mb a).\]

Every injective persistence module \(E\) has a unique decomposition \(\bigoplus E(R/\mf p_\sigma)(-\mb a) = \bigoplus \kk[D_{\mb a,\sigma}]\) (see \cite[Theorem 1.3.3]{goto78b}). Hence, we can use the previous proposition to decompose pointwise finite-dimensional flat persistence modules uniquely as persistence modules of the form \(\bigoplus R_\sigma(-\mb a) = \bigoplus \kk[U_{\mb a,\sigma}]\). In Section \ref{subsec: Flat cotorsion modules} we will prove a more general form of this decomposition that applies to all \emph{cotorsion} flat modules.

\section{Minimal flat resolutions}
\label{sec: Minimal flat resolutions}

\subsection{Flat covers and minimal flat resolutions}\label{subsec: Flat covers and minimal flat resolutions}

Let \(f \colon F \to M\) be a morphism of persistence modules, where \(F\) is flat. We say that \(f\) is a \emph{flat precover}, if for any flat persistence module \(F'\), the induced morphism
\[\underline{\Hom}(F',F) \to \underline{\Hom}(F',M)\]
is an epimorphism. Equivalently, for any morphism \(g \colon F' \to M\) with \(F'\) flat, there exists a morphism \(h \colon F' \to F\) such that \(g = fh\).

We call \(f\) a \emph{flat cover}, if \(f\) is a flat precover and \(fh = f\) for a morphism \(h \colon F \to F\) implies that \(h\) is an isomorphism.

Let \(F_\bullet \colon \cdots \xrightarrow{d_2} F_1 \xrightarrow{d_1} F_0 \xrightarrow{d_0} M \to 0\) be a resolution. We say that \(F_\bullet\) is a \emph{minimal flat resolution}, if \(F_i \to \im d_i\) is a flat cover for all \(i \in \NN\).

It is a deep result of  Bican, El Bashir, and Enochs proved in \cite{bican01} that all modules over any non-graded ring have flat covers. This result was later strengthened in \cite[Theorem 3.5]{rozas01} to cover graded rings as well. In particular, this means that all \(\ZZ^n\)-persistence modules have flat covers, and further minimal flat resolutions.

Note that flat covers and minimal flat resolutions of a persistence module \(M\) are unique up to isomorphism. Thus we will refer to \emph{the} flat cover and \emph{the} minimal flat resolution of \(M\). We will denote the flat cover of \(M\) by \(F(M)\) and the minimal flat resolution of \(M\) by \(F_\bullet(M)\).

\begin{rem}
Let \(M\) and \(N\) be persistence modules. Any morphism \(f \colon M \to N\) induces a lift \(F_\bullet(f) \colon F_\bullet(M) \to F_\bullet(N)\). These lifts are unique up to homotopy. In fact, if \((F_\bullet,d) \to M \to 0\) is a flat resolution and \((F'_\bullet,d') \to N \to 0\) is a flat resolution such that \(F'_i \to \im d'_i\) is a flat precover for each \(i\), then there exists a chain map \(F_\bullet \to F'_\bullet\) lifting \(f\) and this lift is unique up to homotopy. This can be proven with the same argument as the Comparison Theorem for projective resolutions \cite[Theorem 2.2.6]{weibel94}.
\end{rem}

We recall basic facts about flat precovers and covers. We omit some of the proofs and give references to proofs in a non-graded setting that also work in our setting of persistence modules.

\begin{lemma}\label{lem: flat cover is a summand of flat precover}
Let \(M\) be a persistence module and \(g \colon F \to M\) a flat precover. Then, we have an isomorphism \(F \cong F(M) \oplus F'\) such that \(g|_{F(M)}\) is the flat cover and \(F' \subseteq \ker g\).
\end{lemma}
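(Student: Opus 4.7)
The plan is to play the flat cover $f \colon F(M) \to M$ and the given precover $g \colon F \to M$ off each other using the precover lifting property in both directions, and then invoke the defining uniqueness property of the cover to pin down a splitting of $F$.

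First, since $g$ is a flat precover and $F(M)$ is flat, there exists $h \colon F(M) \to F$ with $gh = f$. Conversely, every flat cover is in particular a flat precover, so since $F$ is flat there exists $k \colon F \to F(M)$ with $fk = g$. Composing these gives $f(kh) = (fk)h = gh = f$, and by the defining property of a flat cover applied to the endomorphism $kh$ of $F(M)$, we conclude that $kh$ is an automorphism of $F(M)$.

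Second, from the fact that $kh$ is invertible I would extract the splitting. Setting $p := (kh)^{-1}k \colon F \to F(M)$, one has $ph = \id_{F(M)}$, so $h$ is a split monomorphism and $F = h(F(M)) \oplus \ker p$. Define $F' := \ker p$. Under the identification of $F(M)$ with $h(F(M))$ via $h$, the restriction $g|_{h(F(M))}$ pulls back to $gh = f$, so $g|_{F(M)}$ is precisely the flat cover. Finally, for any $x \in F'$ we have $k(x) = (kh) p(x) = 0$, hence $g(x) = fk(x) = 0$, showing $F' \subseteq \ker g$.

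The only subtle point is the second step: one has to remember that a splitting $p$ of $h$ needs to be constructed from the auxiliary map $k$ by \emph{twisting} by the inverse of $kh$, rather than just using $k$ directly — this is what guarantees both $ph = \id$ and $\ker p \subseteq \ker g$ simultaneously. Everything else is the standard dualization of the Eckmann--Hilton style argument that an (injective/projective/flat) cover is a direct summand of any precover, specialized to persistence modules; no graded subtlety enters because the precover and cover conditions are already formulated inside the category $\PMod$.
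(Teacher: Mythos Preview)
Your argument is correct and is exactly the standard dualization of the envelope-side statement that the paper points to (Xu's Proposition 1.2.2, dualized as in his Theorem 1.2.7): lift in both directions, use the minimality of the cover to force $kh$ to be an automorphism, and split off the image of $h$ with the twisted retraction $(kh)^{-1}k$. The paper gives no details beyond the reference, so your write-up is in fact more explicit than the paper's own proof while following the same route.
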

\begin{proof}
See \cite[Theorem 1.2.7]{xu96}, i.e.~dualize the proof of \cite[Proposition 1.2.2]{xu96}.
\end{proof}

\begin{cor}\label{cor: flat precover is cover iff no nonzero summand in kernel}
A flat precover \(f \colon F \to M\) is the flat cover, if and only if the only direct summand \(F' \subseteq F\) such that \(F' \subseteq \ker f\), is \(F' = 0\).
\end{cor}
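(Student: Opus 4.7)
The plan is to derive both implications directly from Lemma~\ref{lem: flat cover is a summand of flat precover}, with one short additional argument needed for the ``only if'' direction.

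For the forward direction, I would assume $f$ is the flat cover and let $F' \subseteq F$ be a direct summand contained in $\ker f$. Picking a complement $F = F' \oplus F''$, I would consider the idempotent $h \colon F \to F$ that is zero on $F'$ and the identity on $F''$ (i.e.\ the projection onto $F''$ composed with the inclusion back into $F$). Since $F' \subseteq \ker f$, we have $fh = f$. The defining property of a flat cover then forces $h$ to be an isomorphism, in particular $\ker h = 0$; but $F' \subseteq \ker h$ by construction, so $F' = 0$.

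For the reverse direction, assume that the only direct summand of $F$ contained in $\ker f$ is $0$. Lemma~\ref{lem: flat cover is a summand of flat precover} supplies a decomposition $F \cong F(M) \oplus F'$ in which $f$ restricts to the flat cover on the first summand and $F' \subseteq \ker f$ (transporting $F'$ across the isomorphism, it becomes a direct summand of $F$ contained in $\ker f$). By hypothesis, $F' = 0$, so $F \cong F(M)$ and $f$ is the flat cover of $M$.

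I do not expect any real obstacle: the substantive content is already packed into Lemma~\ref{lem: flat cover is a summand of flat precover}, and both directions reduce to the elementary observation that an idempotent splitting off a summand contained in the kernel yields an endomorphism $h$ with $fh = f$ that fails to be an isomorphism unless that summand is zero. The only mild care point is writing the ``kill $F'$'' idempotent explicitly in the forward direction so that the equality $fh = f$ is manifest.
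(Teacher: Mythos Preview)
Your proposal is correct and is exactly the argument the paper has in mind: the corollary is stated without proof, as an immediate consequence of Lemma~\ref{lem: flat cover is a summand of flat precover} together with the definition of a flat cover, and your two directions spell this out precisely.
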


\begin{lemma}
Let \(M\) be a persistence module and \(\cdots \xrightarrow{d_2} F_1 \xrightarrow{d_1} F_0 \xrightarrow{d_0} M \to 0\) the minimal flat resolution. Let \(\cdots \xrightarrow{d'_2} F'_1 \xrightarrow{d'_1} F'_0 \xrightarrow{d'_0} M \to 0\) be a resolution such that \(F'_i \to \im d'_i\) is a flat precover for all \(i \in \NN\). Then, there exist chain maps \(\varphi \colon F_\bullet \to F'_\bullet\) and \(\psi \colon F'_\bullet \to F_\bullet\) that lift \(\id_M\) such that \(\psi \varphi = \id_{F_\bullet}\). Hence, \(F_\bullet\) is a direct summand of \(F'_\bullet\).
\end{lemma}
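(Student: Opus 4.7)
The plan is to extract chain maps in both directions using the comparison theorem recalled in the preceding remark, and then show that their composite is, after a small modification, the identity on $F_\bullet$. This reduces to showing that the composite is already a chain automorphism; then replacing $\psi$ by $(\psi\varphi)^{-1}\psi$ finishes the job.

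First, I would apply the comparison theorem from the remark in both directions. Since each $F_i \to \im d_i$ is a flat cover, and flat covers are in particular flat precovers, the theorem gives a chain map $\psi \colon F'_\bullet \to F_\bullet$ lifting $\id_M$. Since each $F'_i \to \im d'_i$ is a flat precover by hypothesis, it also gives a chain map $\varphi \colon F_\bullet \to F'_\bullet$ lifting $\id_M$. Let $h_\bullet := \psi\varphi \colon F_\bullet \to F_\bullet$; this is a chain map lifting $\id_M$, and the goal becomes showing that each $h_i$ is an isomorphism.

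The core of the proof is an induction on $i$. For $i=0$, the equality $d_0 h_0 = d_0$ combined with the fact that $d_0 \colon F_0 \to M$ is a flat cover forces $h_0$ to be an isomorphism by the defining property of a cover. For the inductive step, assume $h_j$ is an isomorphism for all $j < i$. I would first check that $h_{i-1}$ restricts to an automorphism of $\ker d_{i-1} = \im d_i$: it sends $\ker d_{i-1}$ into itself because $d_{i-1} h_{i-1} = h_{i-2} d_{i-1}$, injectivity of the restriction is immediate from injectivity of $h_{i-1}$, and surjectivity follows because for any $y \in \ker d_{i-1}$ the preimage $x$ under $h_{i-1}$ satisfies $h_{i-2} d_{i-1}(x) = d_{i-1}(y) = 0$, whence $d_{i-1}(x)=0$ by injectivity of $h_{i-2}$. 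Denote this automorphism by $\beta$. Then $d_i h_i = \beta d_i$, and since $\beta$ is an automorphism of $\im d_i$ and $d_i$ is a flat cover of $\im d_i$, the composite $\beta d_i$ is also a flat cover of $\im d_i$. Two flat covers of the same module are uniquely isomorphic, so any morphism $F_i \to F_i$ intertwining them is an isomorphism; in particular $h_i$ is an isomorphism. (Explicitly, the precover property gives $g \colon F_i \to F_i$ with $d_i g = \beta d_i$, and then $d_i (g h_i^{-1}) = d_i$ and $(\beta d_i)(h_i g^{-1}) = \beta d_i$ force both $gh_i^{-1}$ and $h_i g^{-1}$ to be isomorphisms by the cover property, so $h_i$ is too.) The main obstacle in the argument is precisely this inductive step, where one has to verify that an automorphism of $F_{i-1}$ restricts to an automorphism of the cycle submodule before invoking uniqueness of flat covers.

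Once $h_\bullet = \psi\varphi$ is known to be a chain automorphism of $F_\bullet$, its inverse $h_\bullet^{-1}$ is also a chain map. Setting $\tilde\psi := h_\bullet^{-1} \psi$, I would check that $\tilde\psi$ is a chain map lifting $\id_M$: it is a composite of chain maps, and the augmentation condition holds because $d_0 h_0 = d_0$ implies $d_0 h_0^{-1} = d_0$, so $d_0 \tilde\psi_0 = d_0 \psi_0 = d'_0$. Then $\tilde\psi \varphi = h_\bullet^{-1}(\psi\varphi) = \id_{F_\bullet}$, so $\varphi$ is a split monomorphism of chain complexes. Therefore $F_\bullet$ is a direct summand of $F'_\bullet$, as required.
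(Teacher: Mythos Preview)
Your argument is correct and follows essentially the same inductive strategy as the paper's proof: both hinge on the fact that an endomorphism of $F_i$ lying over the flat cover $F_i \to \im d_i$ must be an automorphism. The paper organizes the induction a bit differently---it builds $\varphi_i$ and $\psi_i$ in tandem and normalizes $\psi_i\varphi_i$ to the identity at each stage, so that the induced map on kernels is literally $\id$ rather than an arbitrary automorphism $\beta$---but this is only cosmetic. One small slip: your parenthetical ``explicit'' justification is circular, since you invoke $h_i^{-1}$ and $g^{-1}$ before they are known to exist, and the $g$ you choose satisfies the wrong relation; you want $g$ with $(\beta d_i)g = d_i$, after which $g h_i$ and $h_i g$ are automorphisms by the cover property of $\beta d_i$ and $d_i$ respectively, forcing $h_i$ to be an isomorphism. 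The sentence preceding the parenthetical is correct as stated, so this does not affect the validity of your proof.
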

\begin{proof}
We denote \(K_i = \im d_{i+1}\), and \(K'_i = \im d'_{i+1}\) for all \(i \geq -1\). Fix \(i \in \NN\) and assume that we have morphisms \(K_{i-1} \to K'_{i-1}\) and \(K'_{i-1} \to K_{i-1}\) such that the composition \(K_{i-1} \to K'_{i-1} \to K_{i-1}\) is the identity. Since \(F_i \to K_{i-1}\) and \(F'_i \to K'_{i-1}\) are flat precovers, we get morphisms between \(F_i\) and \(F'_i\) making the diagrams
\[\begin{tikzcd}[column sep=small]
0 \rar & K_i \rar & F_i \dar \rar & K_{i-1} \dar \rar & 0 \\
0 \rar & K'_i \rar & F'_i \rar & K'_{i-1} \rar & 0
\end{tikzcd}
\quad
\begin{tikzcd}[column sep=small]
0 \rar & K_i \rar & F_i \rar & K_{i-1} \rar & 0 \\
0 \rar & K'_i \rar & F'_i \uar \rar & K'_{i-1} \uar \rar & 0
\end{tikzcd}\]
commute. Since \(F_i \to K_{i-1}\) is the flat cover and \(K_{i-1} \to K'_{i-1} \to K_{i-1}\) is the identity, these diagrams show that the composition \(F_i \to F'_i \to F_i\) has to be an isomorphism. We can assume that it is the identity. The rows in the diagrams are exact, so we get induced morphisms \(K_i \to K'_i\) and \(K'_i \to K_i\). The composition \(K_i \to K'_i \to K_i\) is the identity since \(F_i \to F'_i \to F_i\) is the identity.

Starting from \(i = 0\) and the identity morphism \(M \to M\), we can build the chain maps \(\varphi\) and \(\psi\) inductively using this process.
\end{proof}

\begin{lemma}
For morphisms \(f_i \colon F_i \to M_i\), \(i \in \Lambda\), the direct product
\[\prod_{i \in \Lambda} f_i \colon \prod_{i \in \Lambda} F_i \to \prod_{i \in \Lambda} M_i\]
is a flat precover, if and only if \(f_i\) is a flat precover for all \(i \in \Lambda\).
\end{lemma}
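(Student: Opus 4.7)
My plan is to use the universal property of the graded direct product in $\PMod$, together with the canonical split inclusions $\iota_j^F \colon F_j \hookrightarrow \prod_{i \in \Lambda} F_i$ and projections $\pi_j^F \colon \prod_{i \in \Lambda} F_i \to F_j$ (satisfying $\pi_j^F \iota_j^F = \id_{F_j}$), and analogous maps $\iota_j^M, \pi_j^M$ on the $M$-side. These are compatible with $\prod_i f_i$ in the sense that $\left(\prod_i f_i\right)\iota_j^F = \iota_j^M f_j$ and $\pi_j^M\left(\prod_i f_i\right) = f_j \pi_j^F$ for each $j \in \Lambda$; from this almost everything formal falls out.

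For the forward direction, suppose $\prod_i f_i$ is a flat precover. Then $\prod_i F_i$ is flat, and each $F_j$ is flat because it is a direct summand via $\iota_j^F$ and $\pi_j^F$. To verify the precover property of $f_j$, given any flat $F'$ and any $g \colon F' \to M_j$, I would apply the precover property of $\prod_i f_i$ to the composite $\iota_j^M g \colon F' \to \prod_i M_i$ to obtain $\tilde h \colon F' \to \prod_i F_i$ with $\left(\prod_i f_i\right)\tilde h = \iota_j^M g$; then $h := \pi_j^F \tilde h$ satisfies $f_j h = g$ by the compatibility identities. For the backward direction, suppose each $f_i$ is a flat precover, and let $g \colon F' \to \prod_i M_i$ be given with $F'$ flat. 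I would lift each component $\pi_i^M g$ through $f_i$ to some $h_i \colon F' \to F_i$, and then assemble these via the universal property of the product into a single $h \colon F' \to \prod_i F_i$; checking on projections gives $\left(\prod_i f_i\right) h = g$.

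The main obstacle is to show that $\prod_i F_i$ is flat whenever each $F_i$ is flat, since the graded product in $\PMod$ is in general a proper submodule of the ordinary direct product of the underlying $R$-modules. I would verify this by testing against $\Tor_1^R(R/I, -)$ for every homogeneous finitely generated ideal $I \subseteq R$: since $R$ is Noetherian, $R/I$ admits a resolution by finitely generated graded free modules, and tensoring with such a module commutes with graded products (at each degree, the product is a product of $\kk$-vector spaces, and such products are exact). Hence $\Tor_1^R(R/I, \prod_i F_i) \cong \prod_i \Tor_1^R(R/I, F_i) = 0$, so $\prod_i F_i$ is flat by the graded version of the standard flatness criterion.
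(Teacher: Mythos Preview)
Your argument is correct and follows essentially the same route as the paper: the paper's one-line proof invokes the natural isomorphisms $\underline{\Hom}(F',\prod_i F_i) \cong \prod_i \underline{\Hom}(F',F_i)$ and $\underline{\Hom}(F',\prod_i M_i) \cong \prod_i \underline{\Hom}(F',M_i)$, which is precisely the universal property of the product that you unwind elementwise via the split inclusions and projections. Your version is more thorough in one respect: you explicitly verify that the graded product of flat persistence modules is flat (via $\Tor_1(R/I,-)$ and the commutation of finitely generated free tensoring with graded products), a point the paper leaves unstated.
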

\begin{proof}
Follows from the natural isomorphisms \(\underline{\Hom}(F',\prod_{i \in \Lambda} F_i) \cong \prod_{i \in \Lambda} \underline{\Hom}(F',F_i)\) and \(\underline{\Hom}(F',\prod_{i \in \Lambda} M_i) \cong \prod_{i \in \Lambda} \underline{\Hom}(F',M_i)\).
\end{proof}

\begin{lemma}\label{lem: direct sum of flat covers}
Let \(f_1 \colon F_1 \to M_1\) and \(f_2 \colon F_2 \to M_2\) be morphisms. The direct sum \(f_1 \oplus f_2 \colon F_1 \oplus F_2 \to M_1 \oplus M_2\) is the flat cover, if and only if \(f_1\) and \(f_2\) are flat covers.
\end{lemma}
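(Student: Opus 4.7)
The plan is to combine the previous lemma with the characterization that a flat precover $f$ is a flat cover if and only if $fh=f$ forces $h$ to be an isomorphism. Since $F_1\oplus F_2$ is flat iff both $F_i$ are flat, and since a finite direct sum coincides with the direct product, the previous lemma already gives that $f_1\oplus f_2$ is a flat precover iff each $f_i$ is. In both directions of the present statement, only the cover condition needs to be transferred.

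For the ``if'' direction, assume $f_1$ and $f_2$ are flat covers and let $h\colon F_1\oplus F_2\to F_1\oplus F_2$ satisfy $(f_1\oplus f_2)h=f_1\oplus f_2$. Writing $h$ as a matrix $\bigl(\begin{smallmatrix}h_{11}&h_{12}\\ h_{21}&h_{22}\end{smallmatrix}\bigr)$ with $h_{ij}\colon F_j\to F_i$, this equation becomes $f_ih_{ii}=f_i$ and $f_ih_{ij}=0$ for $i\neq j$, so the cover property of each $f_i$ forces $h_{11}$ and $h_{22}$ to be isomorphisms. Multiplying $h$ on the left by the elementary unipotent (hence invertible) matrix $\bigl(\begin{smallmatrix}\id&0\\ -h_{21}h_{11}^{-1}&\id\end{smallmatrix}\bigr)$ yields the upper-triangular matrix with diagonal $h_{11}$ and $h':=h_{22}-h_{21}h_{11}^{-1}h_{12}$. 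A direct computation using $f_2h_{22}=f_2$ and $f_2h_{21}=0$ gives $f_2h'=f_2$, so $h'$ is an isomorphism by the cover property of $f_2$. An upper-triangular endomorphism of $F_1\oplus F_2$ with invertible diagonal entries is invertible (its inverse is again upper-triangular), so $h$ is an isomorphism.

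For the ``only if'' direction, let $h_1\colon F_1\to F_1$ satisfy $f_1h_1=f_1$. Then $h_1\oplus\id_{F_2}$ is an endomorphism of $F_1\oplus F_2$ with $(f_1\oplus f_2)(h_1\oplus\id_{F_2})=f_1\oplus f_2$; by hypothesis it is an isomorphism, which forces $h_1$ itself to be an isomorphism (its inverse is obtained by projecting the inverse of $h_1\oplus\id_{F_2}$ onto the first summand). The argument for $f_2$ is symmetric, so both $f_i$ are flat covers.

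I do not foresee any real obstacle. The only point of substance is the matrix computation in the ``if'' direction, which is just the standard linear-algebra fact that a block upper-triangular endomorphism with invertible diagonal is invertible, carried out in the category of persistence modules.
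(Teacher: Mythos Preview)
Your proof is correct. The paper does not give its own argument here; it simply refers to \cite[Theorem 1.2.10]{xu96} (obtained by dualizing \cite[Theorem 1.2.5]{xu96}), and your block-matrix computation is exactly the kind of direct verification that reference encodes. The only remark worth making is that in the ``only if'' direction you could shortcut the projection argument by noting that in any additive category the assignment $h_1\mapsto h_1\oplus\id_{F_2}$ is a ring homomorphism $\mathrm{End}(F_1)\to\mathrm{End}(F_1\oplus F_2)$ split by $g\mapsto \pi_1 g\,\iota_1$, so invertibility of $h_1\oplus\id_{F_2}$ immediately yields invertibility of $h_1$; but what you wrote already amounts to this.
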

\begin{proof}
See \cite[Theorem 1.2.10]{xu96}, i.e.~dualize the proof of \cite[Theorem 1.2.5]{xu96}.
\end{proof}

\subsection{Cotorsion}\label{subsec: Cotorsion}

Every flat precover is clearly an epimorphism. On the other hand, an epimorphism from a flat persistence module can fail to be a precover. To see one reason why, let \(F\) be a flat persistence module and take a short exact sequence
\[0 \to K \to F \to M \to 0\]
of persistence modules. For all flat persistence modules \(F'\) we have an exact sequence
\[\underline{\Hom}(F',F) \to \underline{\Hom}(F',M) \to \underline{\Ext}^1(F',K).\]
This shows that the epimorphism \(F \to M\) is a flat precover, if and only if the morphism \(\underline{\Hom}(F',M) \to \underline{\Ext}^1(F',K)\) is \(0\) for all flat persistence modules \(F'\). An easy case of when this holds is when \(K\) is a \emph{cotorsion} persistence module:

\begin{defn}
A persistence module \(M\) is \emph{cotorsion}, if for all flat persistence modules \(F\) we have \(\underline{\Ext}^1(F,M)=0\).
\end{defn}

\begin{lemma}\label{lem: epimorphism flat precover when ker cotorsion}
Let \(M\) be a persistence module and \(F\) a flat persistence module. Then, any epimorphism \(f \colon F \to M\) such that \(\ker f\) is cotorsion is a flat precover.
\end{lemma}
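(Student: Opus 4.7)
The plan is to apply the long exact sequence discussion that appears immediately before the definition of cotorsion in the excerpt, which essentially spells out the required argument.

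First, I would start with the short exact sequence
\[0 \to \ker f \to F \to M \to 0\]
induced by the epimorphism $f$. For an arbitrary flat persistence module $F'$, I would apply the left-exact functor $\underline{\Hom}(F',-)$ to obtain the exact sequence
\[\underline{\Hom}(F', F) \to \underline{\Hom}(F', M) \to \underline{\Ext}^1(F', \ker f).\]
By the definition of cotorsion persistence modules, since $\ker f$ is cotorsion and $F'$ is flat, the rightmost term vanishes: $\underline{\Ext}^1(F', \ker f) = 0$. Hence $\underline{\Hom}(F', F) \to \underline{\Hom}(F', M)$ is an epimorphism.

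Since this holds for every flat $F'$, the morphism $f$ satisfies the defining lifting property of a flat precover, which concludes the proof. There is no real obstacle here — the statement is essentially immediate from the definition of cotorsion combined with the long exact sequence of $\underline{\Ext}$, and the excerpt has already foreshadowed the argument in the paragraph preceding the definition.
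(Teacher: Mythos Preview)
Your proposal is correct and matches the paper's approach exactly. The paper does not even include a formal proof of this lemma: it states the long exact sequence argument in the paragraph immediately preceding the definition of cotorsion and then records the lemma as an immediate consequence, which is precisely what you have written out.
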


For an epimorphism \(f \colon F \to M\) where \(F\) is flat, the condition that \(\ker f\) is cotorsion is stronger than \(f\) being a flat precover. For flat covers however, the kernel is always cotorsion.

\begin{prop}\label{prop: kernel of flat cover is cotorsion}
Let \(M\) be a persistence module and \(f \colon F(M) \to M\) the flat cover of \(M\). Then, \(\ker f\) is cotorsion.
\end{prop}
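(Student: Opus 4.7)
The goal is to show $\underline{\Ext}^1(F',\ker f)=0$ for every flat persistence module $F'$; equivalently, every short exact sequence $0 \to \ker f \to L \to F' \to 0$ with $F'$ flat must split. The plan is to adapt the classical argument of Wakamatsu to the graded setting, using both the precover and the cover properties of $f$ together with the fact that flat persistence modules are closed under extensions.

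Set $K = \ker f$. First, I would form the pushout of $K \hookrightarrow F(M)$ along the inclusion $K \hookrightarrow L$, producing a commutative diagram with exact rows
\[\begin{tikzcd}[ampersand replacement=\&]
0 \rar \& K \rar \dar \& F(M) \rar{f} \dar{i} \& M \rar \dar[equal] \& 0 \\
0 \rar \& L \rar{j} \& E \rar{q} \& M \rar \& 0.
\end{tikzcd}\]
Comparing cokernels of the vertical arrows gives an additional short exact sequence $0 \to F(M) \xrightarrow{i} E \xrightarrow{p} F' \to 0$. Since the class of flat persistence modules is closed under extensions (immediate from the long exact sequence for $\Tor_1$, together with the fact that flatness of a persistence module coincides with flatness as an ungraded $R$-module), $E$ is flat.

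Next, I apply the flat precover property of $f$ to the epimorphism $q \colon E \to M$, obtaining $g \colon E \to F(M)$ with $fg = q$. The composite $gi \colon F(M) \to F(M)$ then satisfies $f(gi) = qi = f$, so by the cover property of $f$, $gi$ is an isomorphism. Hence $i$ is a split monomorphism, so the sequence $0 \to F(M) \xrightarrow{i} E \xrightarrow{p} F' \to 0$ splits; pick any section $\tilde s \colon F' \to E$ of $p$.

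Finally, I would modify $\tilde s$ so that it lands in $L = \ker q$. By the precover property again, the composite $q\tilde s \colon F' \to M$ lifts through $f$ to some $h \colon F' \to F(M)$ with $fh = q\tilde s$. Setting $s := \tilde s - ih$, one checks that $ps = p\tilde s - pih = \id_{F'}$ and $qs = q\tilde s - qih = q\tilde s - fh = 0$. Thus $s$ factors through $j \colon L \hookrightarrow E$, yielding a section of $L \to F'$ and hence a splitting of the original extension. The real content is concentrated in the pushout construction and the double use of the (pre)cover property; the main obstacle I anticipate is purely diagrammatic bookkeeping, verifying that the pushout yields both rows exact along with the sequence $0 \to F(M) \to E \to F' \to 0$, since the rest of the argument transfers verbatim from the ungraded setting.
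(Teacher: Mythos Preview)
Your argument is correct: this is the classical Wakamatsu lemma, carried out carefully in the graded abelian category. The pushout produces both exact rows and the auxiliary sequence $0\to F(M)\to E\to F'\to 0$; closure of flats under extensions follows from the $\Tor$ long exact sequence; and the two applications of the (pre)cover property give first the splitting of $i$ and then the correction of $\tilde s$ into a section landing in $\ker q$. Each step works verbatim for persistence modules since the category is abelian with the relevant $\otimes$ and $\underline{\Hom}$.

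The paper's proof invokes exactly the same mechanism but packages it differently: it fixes a projective presentation $0\to S\to N\to F\to 0$ of the flat module $F$, observes that $\underline{\Ext}^1(F,\ker f)$ is the cokernel of $\underline{\Hom}(N,\ker f)\to\underline{\Hom}(S,\ker f)$, cites Enochs' original argument to obtain surjectivity in degree~$0$, and then shifts to get every degree. The cited argument of Enochs is precisely the Wakamatsu-type pushout-and-lift you wrote out. So the two proofs share the same core; yours is self-contained and works globally in the graded category, while the paper reduces to a single degree and outsources the key step to the reference.
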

\begin{proof}
We adapt the proof of \cite[Lemma 2.2]{enochs84} to our graded setting as follows. Let \(F\) be a flat persistence module. Let \(N\) be a projective persistence module with a submodule \(S \subseteq N\) such that \(N/S = F\). Now, we get an exact sequence
\[\underline{\Hom}(N,\ker f) \to \underline{\Hom}(S,\ker f) \to \underline{\Ext}^1(F,\ker f) \to \underline{\Ext}^1(N,\ker f) = 0.\]
The argument in \cite[Lemma 2.2]{enochs84} shows that \(\underline{\Hom}(N,\ker f)_0 \to \underline{\Hom}(S,\ker f)_0\) is surjective. Thus \(\underline{\Ext}^1(F,\ker f)_0 = 0\).

Since any shift of a flat persistence module is still flat, this implies that \(\underline{\Ext}^1(F,\ker f)_\mb a = \underline{\Ext}^1(F(-\mb a),\ker f)_0 = 0\) for all flat persistence modules \(F\) and \(\mb a \in \ZZ^n\). Thus \(\ker f\) is cotorsion.
\end{proof}

\begin{cor}\label{cor: flat cover is cotorsion}
A persistence module \(M\) is cotorsion, if and only if the flat cover \(F(M)\) is cotorsion.
\end{cor}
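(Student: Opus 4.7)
The proof rests on the short exact sequence $0 \to K \to F(M) \xrightarrow{f} M \to 0$ attached to the flat cover, where $K = \ker f$ is cotorsion by Proposition~\ref{prop: kernel of flat cover is cotorsion}. Applying $\underline{\Ext}^*(F,-)$ for an arbitrary flat persistence module $F$ and extracting two consecutive segments of the long exact sequence will give both implications.

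For the ``only if'' direction, suppose $M$ is cotorsion. The segment
\[\underline{\Ext}^1(F,K) \to \underline{\Ext}^1(F,F(M)) \to \underline{\Ext}^1(F,M)\]
has both outer terms zero (by the cotorsion hypotheses on $K$ and $M$, respectively, with $F$ flat), so the middle term vanishes. Since $F$ was arbitrary, $F(M)$ is cotorsion.

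For the ``if'' direction, suppose $F(M)$ is cotorsion. The segment
\[\underline{\Ext}^1(F,F(M)) \to \underline{\Ext}^1(F,M) \to \underline{\Ext}^2(F,K)\]
has zero on the left by hypothesis, so the real task is to show $\underline{\Ext}^2(F,K) = 0$. This reduces to an auxiliary claim: if $K$ is cotorsion and $F$ is flat, then $\underline{\Ext}^i(F,K) = 0$ for every $i \geq 1$. I would prove this by induction on $i$, with the base case $i = 1$ being the definition of cotorsion. For the inductive step, choose a short exact sequence $0 \to F_1 \to P \to F \to 0$ with $P$ free; the $\Tor$ long exact sequence forces $F_1$ to be flat, since $\Tor_j(-,F)$ and $\Tor_j(-,P)$ both vanish in positive degrees, hence so does $\Tor_j(-,F_1)$. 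Dimension shifting gives $\underline{\Ext}^{i+1}(F,K) \cong \underline{\Ext}^i(F_1,K)$, which vanishes by the inductive hypothesis applied to the flat module $F_1$. Specializing to $i = 1$ yields $\underline{\Ext}^2(F,K) = 0$, whence $\underline{\Ext}^1(F,M) = 0$ and $M$ is cotorsion.

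The main obstacle is precisely this syzygy-based upgrade from $\underline{\Ext}^1$-vanishing to $\underline{\Ext}^2$-vanishing for cotorsion modules; once it is in place, both implications drop out immediately from the long exact sequence attached to the flat cover.
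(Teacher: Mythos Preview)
Your proof is correct and follows essentially the same route as the paper's: both use the long exact $\underline{\Ext}^*(F,-)$ sequence attached to $0 \to K \to F(M) \to M \to 0$, together with $\underline{\Ext}^1(F,K)=0$ and $\underline{\Ext}^2(F,K)=0$, to identify $\underline{\Ext}^1(F,F(M))$ with $\underline{\Ext}^1(F,M)$. The paper simply asserts ``It is easy to see that $\underline{\Ext}^2(F,K)=0$'' and then reads off the isomorphism from the four-term segment in one step, whereas you separate the two implications and explicitly justify the $\underline{\Ext}^2$-vanishing via the syzygy dimension-shift argument; that is the only difference.
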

\begin{proof}
Let \(F\) be a flat persistence modules. Let \(K\) be the kernel of the flat cover \(F(M) \to M\). It is easy to see that \(\underline{\Ext}^2(F,K) = 0\). The exact sequence
\[0 = \underline{\Ext}^1(F,K) \to \underline{\Ext}^1(F,F(M)) \to \underline{\Ext}^1(F,M) \to \underline{\Ext}^2(F,K) = 0\]
then shows that \(\underline{\Ext}^1(F,F(M)) \cong \underline{\Ext}^1(F,M)\). Thus, \(M\) is cotorsion, if and only if \(F(M)\) is cotorsion.
\end{proof}

\begin{lemma}\label{lem: Matlis dual cotorsion}
For a persistence module \(M\), and an injective persistence module \(E\), the persistence module \(\underline{\Hom}(M,E)\) is cotorsion.
\end{lemma}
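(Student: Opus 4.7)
The plan is to show directly that $\underline{\Ext}^1(F, \underline{\Hom}(M,E)) = 0$ for every flat persistence module $F$, using the tensor-hom adjunction to move the problem from $\underline{\Hom}(M,E)$ to $E$, where injectivity can be exploited. The key identity is the natural isomorphism of functors
\[\underline{\Hom}(-, \underline{\Hom}(M,E)) \cong \underline{\Hom}(- \otimes M, E)\]
recalled in the preliminaries.

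First, I would pick a short exact sequence $0 \to K \to P \to F \to 0$ with $P$ projective (equivalently free, by the Höppner result cited earlier). Applying $\underline{\Ext}^\bullet(-, \underline{\Hom}(M,E))$ and using $\underline{\Ext}^1(P, -) = 0$, the long exact sequence identifies $\underline{\Ext}^1(F, \underline{\Hom}(M,E))$ with the cokernel of the restriction map
\[\underline{\Hom}(P, \underline{\Hom}(M,E)) \longrightarrow \underline{\Hom}(K, \underline{\Hom}(M,E)).\]

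Next, by the naturality of the adjunction in the first variable, this map is isomorphic to the map
\[\underline{\Hom}(P \otimes M, E) \longrightarrow \underline{\Hom}(K \otimes M, E)\]
induced by the inclusion $K \otimes M \hookrightarrow P \otimes M$. Since $F$ is flat, tensoring $0 \to K \to P \to F \to 0$ with $M$ yields a short exact sequence $0 \to K \otimes M \to P \otimes M \to F \otimes M \to 0$, and since $E$ is injective, applying the exact functor $\underline{\Hom}(-, E)$ preserves its exactness. In particular the displayed map is surjective, so its cokernel is zero, giving $\underline{\Ext}^1(F, \underline{\Hom}(M,E)) = 0$ as required.

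The only mildly delicate point is verifying that the adjunction isomorphism really does identify the two candidate maps (and hence the two cokernels) compatibly; this is immediate from the naturality of the adjunction in the contravariant slot applied to the monomorphism $K \hookrightarrow P$, but it is worth stating explicitly rather than hand-waving.
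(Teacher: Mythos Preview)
Your proof is correct and follows essentially the same route as the paper: choose a short exact sequence $0 \to K \to P \to F \to 0$ with $P$ projective, use the long exact sequence to reduce $\underline{\Ext}^1(F,\underline{\Hom}(M,E))$ to the cokernel of $\underline{\Hom}(P,\underline{\Hom}(M,E)) \to \underline{\Hom}(K,\underline{\Hom}(M,E))$, apply tensor--hom adjunction, and conclude surjectivity from the flatness of $F$ (giving $K \otimes M \hookrightarrow P \otimes M$) together with the injectivity of $E$. The paper phrases the flatness step via $\Tor_1(F,M)=0$ rather than stating the full short exact sequence after tensoring, but this is the same argument.
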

\begin{proof}
The non-graded proof of \cite[Lemma 2.1]{enochs84} works in our setting as well, but requires the reader to be familiar with pure injective modules. For convenience, we give a short proof without explicitly using pure injectives.

Let \(F\) be a flat persistence module and \(0 \to K \to P \to F \to 0\) a short exact sequence, where \(P\) is projective. From this, we get an exact sequence
\[\underline{\Hom}(P,\underline{\Hom}(M,E)) \to \underline{\Hom}(K,\underline{\Hom}(M,E)) \to \underline{\Ext}^1(F,\underline{\Hom}(M,E)) \to 0.\]
By the tensor-hom adjunction, this sequence is isomorphic to
\[\underline{\Hom}(P \otimes M,E) \to \underline{\Hom}(K \otimes M,E) \to \underline{\Ext}^1(F,\underline{\Hom}(M,E)) \to 0.\]
The exact sequence \(0 = \Tor_1(F,M) \to K \otimes M \to P \otimes M\) shows that \(K \otimes M \to P \otimes M\) is a monomorphism. Hence, the morphism \(\underline{\Hom}(P \otimes M,E) \to \underline{\Hom}(K \otimes M,E)\) is an epimorphism, so \(\underline{\Ext}^1(F,\underline{\Hom}(M,E)) = 0\).
\end{proof}

\begin{ex}
Persistent homology can fail to be cotorsion. However, persistent cohomology is always cotorsion, as it is the Matlis dual of persistent homology.
\end{ex}

\begin{cor}\label{cor: pfd cotorsion}
Pointwise finite-dimensional persistence modules are cotorsion.
\end{cor}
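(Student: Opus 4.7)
The plan is to reduce this immediately to Lemma \ref{lem: Matlis dual cotorsion} by recognizing a pointwise finite-dimensional module as its own double Matlis dual, which in particular exhibits it as the graded Hom into an injective module.

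First I would invoke Lemma \ref{lem: pfd iff isomorphic to double Matlis dual}: if $M$ is pointwise finite-dimensional, then the natural morphism $M \to (M^\vee)^\vee$ is an isomorphism. Unwinding the definition of Matlis duality, this gives
\[M \cong (M^\vee)^\vee = \underline{\Hom}(M^\vee, E(\kk)).\]
Since $E(\kk)$ is injective by construction, the right-hand side is of the form $\underline{\Hom}(N, E)$ with $E$ injective, so Lemma \ref{lem: Matlis dual cotorsion} applied with $N = M^\vee$ and $E = E(\kk)$ yields that $\underline{\Hom}(M^\vee, E(\kk))$ is cotorsion. Transporting the cotorsion property across the isomorphism, $M$ itself is cotorsion.

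There is essentially no obstacle here, because the two lemmas fit together perfectly; the only thing to double-check is that being cotorsion is preserved under isomorphism, which is immediate from the definition (isomorphic modules have isomorphic $\underline{\Ext}^1(F,-)$ for every flat $F$). So the corollary is really just the composition of the two preceding results.
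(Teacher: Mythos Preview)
Your proof is correct and matches the paper's intended argument: the corollary is placed immediately after Lemma~\ref{lem: Matlis dual cotorsion} (and the example noting that Matlis duals are cotorsion) precisely because a pointwise finite-dimensional module is its own double Matlis dual by Lemma~\ref{lem: pfd iff isomorphic to double Matlis dual}, hence of the form $\underline{\Hom}(-,E(\kk))$ and therefore cotorsion.
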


\begin{rem}\label{rem: flat cover and inj hull defns agree with Miller's defn}
Let \(M\) be a persistence module with an epimorphism \(f \colon \bigoplus_{i=1}^m R_{\sigma_i}(-\mb a_i) \to M\). Since \(\ker f\) is pointwise finite-dimensional, it is cotorsion and so \(f\) is a flat precover. By Lemma \ref{lem: flat cover is a summand of flat precover}, there exists a persistence module \(F'\) such that \(F(M) \oplus F' \cong \bigoplus_{i=1}^m R_{\sigma_i}(-\mb a_i)\). Since \(F(M)^\vee\) is injective and \(\dim_\kk (F(M)^\vee)_\mb a \leq m\) for all \(\mb a \in \ZZ^n\), we have a finite decomposition \(F(M)^\vee \cong \bigoplus_{i=1}^k E(R/\mf p_{\tau_i})(-\mb b_i)\) and further \(F(M) \cong \bigoplus_{i=1}^k R_{\tau_i}(\mb b_i)\). Similarly we see that \(F' \cong \bigoplus_{i=1}^l R_{\rho_i}(\mb c_i)\) with \(m = k+l\).

In particular, this shows that our definition of the flat cover of \(M\) is equivalent to Miller's definition of a minimal flat cover of \(M\) in \cite[Definition 5.8]{miller20b}, i.e.~to an epimorphism \(\bigoplus_{i=1}^m R_{\sigma_i}(-\mb a_i) \to M\) where \(m\) is the smallest number for which such an epimorphism exists. Dually, if there exists a monomorphism \(M \to \bigoplus_{i=1}^l E(R/\mf p_{\sigma_i})(-\mb a_i)\), then our definition of injective hull is equivalent to Miller's definition of minimal injective hull \cite[Definition 5.6]{miller20b}, i.e.~to a monomorphism \(M \to \bigoplus_{i=1}^l E(R/\mf p_{\sigma_i})(-\mb a_i)\) where \(l\) is the smallest number for which such a monomorphism exists.
\end{rem}

\begin{lemma}\label{lem: product of cotorsion is cotorsion}
For a family of persistence modules \((M_i)_{i \in \Lambda}\), the product \(\prod_{i \in \Lambda} M_i\) is cotorsion, if and only if each \(M_i\) is cotorsion.
\end{lemma}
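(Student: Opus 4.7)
The plan is to reduce the statement to the fact that $\underline{\Ext}^1(F,-)$ commutes with direct products in its second variable, which will make both implications fall out simultaneously from the definition of cotorsion.

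First I would record that the direct product in $\PMod$ is computed degreewise as a $\kk$-vector space product, and since products of vector spaces are exact, direct products in $\PMod$ form an exact functor. Also, $\underline{\Hom}(F,-)$ commutes with direct products by the usual universal property applied in each shift: for every $\mb a \in \ZZ^n$,
\[\Nat\bigl(F,\textstyle\prod_i M_i(\mb a)\bigr) \cong \prod_i \Nat(F,M_i(\mb a)),\]
so taking the direct sum over $\mb a$ gives $\underline{\Hom}(F,\prod_i M_i) \cong \prod_i \underline{\Hom}(F,M_i)$.

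Next I would promote this to the claim that $\underline{\Ext}^1(F,\prod_i M_i) \cong \prod_i \underline{\Ext}^1(F,M_i)$ for every persistence module $F$. The cleanest route is to choose a short exact sequence $0 \to K \to P \to F \to 0$ with $P$ projective, so that for any persistence module $N$ we have
\[\underline{\Ext}^1(F,N) \cong \coker\bigl(\underline{\Hom}(P,N) \to \underline{\Hom}(K,N)\bigr)\]
(using $\underline{\Ext}^1(P,-)=0$). Plugging in $N = \prod_i M_i$ and $N = M_i$ separately, and using that both $\underline{\Hom}(-,-)$ and $\coker$ commute with direct products (the latter because products are exact), we conclude the desired isomorphism.

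Finally I would apply this to restrict $F$ to flat persistence modules: $\prod_i M_i$ is cotorsion if and only if $\underline{\Ext}^1(F,\prod_i M_i)=0$ for every flat $F$, which via the isomorphism above is equivalent to $\underline{\Ext}^1(F,M_i)=0$ for every flat $F$ and every $i \in \Lambda$, i.e.\ to each $M_i$ being cotorsion. There is no real obstacle here; the only subtlety is making sure direct products in $\PMod$ behave exactly, which is immediate from the degreewise formula recalled in Section \ref{sec: Preliminaries}.
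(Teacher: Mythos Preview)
Your proposal is correct and follows exactly the paper's route: the paper's entire proof is the single sentence ``The claim follows from the identity \(\underline{\Ext}^1(F, \prod_{i \in \Lambda} M_i) \cong \prod_{i \in \Lambda} \underline{\Ext}^1(F,M_i)\),'' and you have simply supplied a careful justification of that identity. The extra detail you give (exactness of degreewise products, \(\underline{\Hom}\) commuting with products, and the cokernel computation via a projective presentation of \(F\)) is all sound and fills in what the paper leaves implicit.
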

\begin{proof}
The claim follows from the identity \(\underline{\Ext}^1(F, \prod_{i \in \Lambda} M_i) \cong \prod_{i \in \Lambda} \underline{\Ext}^1(F,M_i)\).
\end{proof}

\begin{ex}\label{ex: flat cover of Tp}
With the help of cotorsion, we can give our first non-finitely generated example of flat covers. We choose a face \(\sigma\), and let
\[M = \bigoplus_{\mb a \in \ZZ\sigma^\perp} \kk(\sigma)(-\mb a)^{(\beta_\mb a)}\]
for some sets \(\beta_\mb a\) for each \(\mb a \in \ZZ\sigma^\perp\). Note that these sets are uniquely determined by \(M\). Let \(F = \prod_{\mb a \in \ZZ\sigma^\perp} R_\sigma(-\mb a)^{(\beta_\mb a)}\) and let \(p \colon F \to F/\mf p_\sigma F = \prod_{\mb a \in \ZZ\sigma^\perp} \kk(\sigma)(-\mb a)^{(\beta_\mb a)} = M\) be the projection. We will show that \(p\) is the flat cover of \(M\).

We start by showing that \(p\) is a flat precover. Note that \(\mf p_\sigma R_\sigma(-\mb a)\) is cotorsion since it is pointwise finite-dimensional. Hence, any product of persistence modules of the form \(\mf p_\sigma R_\sigma(-\mb a)\) is cotorsion by Lemma \ref{lem: product of cotorsion is cotorsion}. Since
\[\mf p_\sigma F = \prod_{\mb a \in \ZZ\sigma^\perp} (\mf p_\sigma R_\sigma(-\mb a))^{(\beta_\mb a)} \subseteq \prod_{\mb a \in \ZZ\sigma^\perp} (\mf p_\sigma R_\sigma(-\mb a))^{\beta_\mb a}\]
is the embedding of a direct summand, \(\mf p_\sigma F\) is cotorsion. Since \(\ker p = \mf p_\sigma F\) is cotorsion, Lemma \ref{lem: epimorphism flat precover when ker cotorsion} shows that \(p\) is a flat precover.

By Lemma \ref{lem: flat cover is a summand of flat precover}, we can then write \(F = F(M) \oplus A\) with \(p(A) = 0\), i.e.~\(A \subseteq \mf p_\sigma F\). Since \(A \subseteq \mf p_\sigma F(M) \oplus \mf p_\sigma A \subseteq F(M) \oplus A\), we must have \(A \subseteq \mf p_\sigma A\). Further, we get
\[A = \bigcap_{i \in \ZZ_+} \mf p_\sigma^i A \subseteq \bigcap_{i \in \ZZ_+} \mf p_\sigma^i F = 0,\]
so \(F = F(M)\) and \(p\) is the flat cover of \(M\).
\end{ex}

\begin{ex}\label{ex: q-tame is cotorsion}
Let \(M\) be a persistence module and set \(\mf m = \langle x_1,\dots,x_n \rangle\). The persistence module \(M/\mf m M\) is simply a \(\ZZ^n\)-graded \(\kk\)-vector space, so
\[M/\mf m M \cong \bigoplus_{\mb a \in \ZZ^n} \kk(-\mb a)^{(\beta_{\mb a_i})} = \prod_{\mb a \in \ZZ^n} \kk(-\mb a)^{(\beta_{\mb a_i})}\]
This is cotorsion as it is a direct summand of \(\prod_{\mb a \in \ZZ^n} \kk(-\mb a)^{\beta_{\mb a_i}}\), which is cotorsion as a product of cotorsion persistence modules. If \(\mf m M\) is also cotorsion, the short exact sequence
\[0 \to \mf m M \to M \to M/\mf m M \to 0\]
shows that \(M\) is cotorsion.

Now, let \(\mb P\) be a preordered set with an order preserving injection \(i \colon \ZZ^n \to \mb P\). Consider a functor \(N \colon \mb P \to \Vect\), such that \(N(a < b)\) is of finite rank for all \(a < b \in \mb P\). Let \(M\) be the \(\ZZ^n\)-persistence module corresponding to the functor \(N \circ i\). Now \(\mf m M\) is cotorsion as it is pointwise finite-dimensional. Thus \(M\) is cotorsion. In particular, this shows that discretizations of q-tame persistence modules are cotorsion.
\end{ex}

\begin{lemma}\label{lem: cotorsion iff cotorsion resolution}
Let \(M\) be a persistence module with a resolution
\[\cdots \xrightarrow{d_2} C_1 \xrightarrow{d_1} C_0 \xrightarrow{d_0} M \to 0\]
such that \(C_i\) is cotorsion for each \(i \in \NN\). Then, \(M\) is cotorsion.
\end{lemma}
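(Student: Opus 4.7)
The plan is to verify the cotorsion condition $\underline{\Ext}^1(F, M) = 0$ for every flat persistence module $F$ by dimension-shifting along the given resolution, exploiting the fact that $R$ has global dimension $n$ so that $\underline{\Ext}^j$ vanishes for $j > n$.

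The first step is to promote the cotorsion hypothesis on each $C_i$ to the stronger statement that $\underline{\Ext}^j(F, C_i) = 0$ for every flat $F$ and every $j \geq 1$. Given a flat $F$, I would choose a short exact sequence $0 \to F' \to P \to F \to 0$ with $P$ projective; the long exact sequence of $\Tor$, combined with the vanishing of $\Tor_{\geq 1}(F, -)$ and $\Tor_{\geq 1}(P, -)$, forces $F'$ to be flat as well. Applying $\underline{\Hom}(-, C_i)$ and using that $\underline{\Ext}^{\geq 1}(P, -) = 0$ then yields the shift
\[\underline{\Ext}^{j+1}(F, C_i) \cong \underline{\Ext}^j(F', C_i) \quad (j \geq 1),\]
and iterating this with successive syzygies (each flat by the same argument) reduces the computation to $\underline{\Ext}^1$ of a flat module into $C_i$, which vanishes by hypothesis.

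Next, I would break the resolution into short exact sequences
\[0 \to K_i \to C_i \to K_{i-1} \to 0,\]
where $K_{-1} := M$ and $K_i := \im d_{i+1}$ for $i \geq 0$. The long exact sequence of $\underline{\Ext}$ applied to each triple, together with the vanishing established in the previous step, gives $\underline{\Ext}^j(F, K_{i-1}) \cong \underline{\Ext}^{j+1}(F, K_i)$ for all $j \geq 1$ and $i \geq 0$. Concatenating these isomorphisms produces
\[\underline{\Ext}^1(F, M) \cong \underline{\Ext}^{i+2}(F, K_i)\]
for every $i \geq 0$. Choosing $i$ with $i + 2 > n$ forces the right-hand side to vanish by finiteness of global dimension, so $\underline{\Ext}^1(F, M) = 0$ and $M$ is cotorsion.

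I do not expect any step to pose a serious obstacle; the one mildly technical point is that a kernel $F'$ of a surjection $P \twoheadrightarrow F$ from a projective onto a flat module is itself flat, but this is the standard symmetric-$\Tor$ argument and transfers to the graded setting without change.
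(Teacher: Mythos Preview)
Your proposal is correct and matches the paper's argument essentially verbatim: both break the resolution into short exact sequences and dimension-shift $\underline{\Ext}^1(F,M)$ up to $\underline{\Ext}^{n+1}(F,\im d_n)=0$ using finite global dimension. The only difference is that you spell out why $\underline{\Ext}^{j}(F,C_i)=0$ for all $j\geq 1$ via the flat-syzygy argument, whereas the paper simply asserts these vanishings (it had already used the $j=2$ case without comment in the proof of Corollary~\ref{cor: flat cover is cotorsion}).
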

\begin{proof}
We adapt the non-graded proof of \cite[Lemma 2.5]{marley16} which we can simplify as our ring has global dimension \(n\). For all flat persistence modules \(F\) and \(i \in \NN\) we have exact sequences
\[0 = \underline{\Ext}^{i+1}(F,C_i) \to \underline{\Ext}^{i+1}(F,\im d_i) \to \underline{\Ext}^{i+2}(F,\im d_{i+1}) \to \underline{\Ext}^{i+2}(F,C_i) = 0.\]
Thus \(\underline{\Ext}^1(F,M) \cong \underline{\Ext}^2(F,\im d_1) \cong \cdots \cong \underline{\Ext}^{n+1}(F,\im d_n) = 0\) so \(M\) is cotorsion.
\end{proof}

\begin{lemma}\label{lem: coloc preserves cotorsion}
Let \(M\) be a cotorsion persistence module, and \(\sigma\) a face of \(\NN^n\). Then \(M^\sigma\) is a cotorsion persistence module.
\end{lemma}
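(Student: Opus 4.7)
The plan is to reduce the cotorsion property of $M^\sigma$ to the cotorsion property of $M$ via the tensor–hom adjunction together with the fact that localization is exact and preserves flatness.

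First, I would fix a flat persistence module $F$ and choose a short exact sequence
\[0 \to K \to P \to F \to 0,\]
with $P$ projective (and hence flat). Since localization along $\sigma$ is exact, we obtain a short exact sequence
\[0 \to K_\sigma \to P_\sigma \to F_\sigma \to 0,\]
and the localization $F_\sigma = F \otimes_R R_\sigma$ of the flat module $F$ remains flat. Applying $\underline{\Hom}(-, M)$ yields the beginning of the long exact sequence
\[\underline{\Hom}(P_\sigma, M) \to \underline{\Hom}(K_\sigma, M) \to \underline{\Ext}^1(F_\sigma, M),\]
and since $M$ is cotorsion and $F_\sigma$ is flat, the rightmost term vanishes, so $\underline{\Hom}(P_\sigma, M) \to \underline{\Hom}(K_\sigma, M)$ is an epimorphism.

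Next, I would invoke the adjunction $\underline{\Hom}(N \otimes R_\sigma, M) \cong \underline{\Hom}(N, \underline{\Hom}(R_\sigma, M)) = \underline{\Hom}(N, M^\sigma)$, which holds for every persistence module $N$ and identifies $\underline{\Hom}(N_\sigma, M)$ with $\underline{\Hom}(N, M^\sigma)$ naturally in $N$. Applying this with $N = P$ and $N = K$ translates the previous surjectivity statement into the surjectivity of $\underline{\Hom}(P, M^\sigma) \to \underline{\Hom}(K, M^\sigma)$. Combined with the long exact sequence obtained from $0 \to K \to P \to F \to 0$ and $\underline{\Ext}^1(P, M^\sigma) = 0$, this gives $\underline{\Ext}^1(F, M^\sigma) = 0$. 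Since $F$ was an arbitrary flat persistence module, $M^\sigma$ is cotorsion.

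There is essentially no serious obstacle here; the only thing to verify carefully is that the adjunction $\underline{\Hom}(- \otimes R_\sigma, M) \cong \underline{\Hom}(-, M^\sigma)$ is the graded version already used elsewhere in the paper, and that flatness is preserved by localization, both of which are standard. One might alternatively give a direct computation with $\underline{\Ext}$ using a flat-preserving resolution, but the short-exact-sequence argument above is the cleanest route and avoids any spectral sequence machinery.
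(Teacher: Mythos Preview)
Your proof is correct and follows essentially the same argument as the paper: take a short exact sequence $0 \to K \to P \to F \to 0$ with $P$ projective, use the tensor--hom adjunction $\underline{\Hom}(N_\sigma, M) \cong \underline{\Hom}(N, M^\sigma)$ to compare the two $\underline{\Hom}$ sequences, and invoke $\underline{\Ext}^1(F_\sigma, M) = 0$ (since $F_\sigma$ is flat and $M$ cotorsion) together with $\underline{\Ext}^1(P, M^\sigma) = 0$ to conclude. The paper packages these steps into a single commutative diagram rather than writing them out sequentially, but the content is the same.
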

\begin{proof}
Let \(F\) be a flat persistence module with a short exact sequence \(0 \to K \to P \to F \to 0\) where \(P\) is projective. From the tensor-hom adjunction we get the commutative diagram
\[\begin{tikzcd}[column sep=small]
\underline{\Hom}(P,M^\sigma) \dar{\cong}\rar & \underline{\Hom}(K,M^\sigma) \dar{\cong}\rar & \underline{\Ext}^1(F,M^\sigma) \rar & \underline{\Ext}^1(P,M^\sigma) = 0 \\
\underline{\Hom}(P_\sigma,M) \rar & \underline{\Hom}(K_\sigma,M) \rar & \underline{\Ext}^1(F_\sigma,M) = 0
\end{tikzcd}\]
where the rows are exact. Thus the morphism \(\underline{\Hom}(P,M^\sigma) \to \underline{\Hom}(K,M^\sigma)\) is an epimorphism and so \(\underline{\Ext}^1(F,M^\sigma) = 0\).
\end{proof}

\begin{lemma}\label{lem: coloc preserves short exact sequence if ker cotorsion}
Let \(\sigma\) be a face and \(0 \to K \to M \to N \to 0\) a short exact sequence of persistence modules. If \(K\) is cotorsion, then \(0 \to K^\sigma \to M^\sigma \to N^\sigma \to 0\) is exact. Hence, colocalization along \(\sigma\) preserves exact sequences with cotorsion kernels.
\end{lemma}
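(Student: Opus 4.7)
The plan is to read off the conclusion from the long exact sequence of derived functors of colocalization, using cotorsion to kill the obstruction.

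Since $(-)^\sigma = \underline{\Hom}(R_\sigma, -)$, this functor is left exact, and its right derived functors are $\underline{\Ext}^i(R_\sigma, -)$. Applying this to the short exact sequence $0 \to K \to M \to N \to 0$ yields the long exact sequence
\[0 \to K^\sigma \to M^\sigma \to N^\sigma \to \underline{\Ext}^1(R_\sigma, K) \to \underline{\Ext}^1(R_\sigma, M) \to \cdots.\]
Left exactness already gives us exactness at $K^\sigma$ and $M^\sigma$, so the only thing left to check is that $M^\sigma \to N^\sigma$ is an epimorphism. By the long exact sequence, it suffices to show that $\underline{\Ext}^1(R_\sigma, K) = 0$.

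Here is where the two hypotheses combine. The persistence module $R_\sigma$ is flat, being a localization of the free module $R$ (this was already noted in the excerpt). Since $K$ is cotorsion, the definition of cotorsion gives $\underline{\Ext}^1(F, K) = 0$ for every flat $F$; applying this with $F = R_\sigma$ yields $\underline{\Ext}^1(R_\sigma, K) = 0$, as needed. Hence the sequence $0 \to K^\sigma \to M^\sigma \to N^\sigma \to 0$ is exact.

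For the final sentence of the statement, I would just note that any exact sequence with cotorsion kernel splices into short exact sequences, each with a cotorsion leftmost term; applying the short-exact case to each piece and reassembling shows that colocalization along $\sigma$ preserves such exact sequences. There is no real obstacle here beyond invoking the right flatness of $R_\sigma$ at the right moment; the whole argument is essentially a one-line application of the definition of cotorsion to the long $\underline{\Ext}$-sequence of $(-)^\sigma$.
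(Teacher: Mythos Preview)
Your argument is correct and is exactly the paper's approach: the paper's entire proof is the single displayed line
\[0 \to K^\sigma \to M^\sigma \to N^\sigma \to \underline{\Ext}^1(R_\sigma,K) = 0,\]
which is precisely the long exact $\underline{\Ext}$-sequence you wrote out, truncated at the vanishing term coming from $R_\sigma$ flat and $K$ cotorsion. One small caution on your final paragraph: the claim that an arbitrary longer exact sequence ``with cotorsion kernel'' splices into short exact sequences \emph{each} with cotorsion leftmost term is not automatic (intermediate images need not be cotorsion), so that sentence is a bit loose; the paper's ``Hence'' clause is equally informal and in the later applications the needed cotorsion of intermediate terms is supplied separately.
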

\begin{proof}
The result follows from the exact sequence
\[0 \to K^\sigma \to M^\sigma \to N^\sigma \to \underline{\Ext}^1(R_\sigma,K) = 0.\]
\end{proof}

\subsection{Flat cotorsion modules}\label{subsec: Flat cotorsion modules}

In this section, we will show that flat cotorsion persistence modules have unique decompositions that are similar to the decompositions of injective persistence modules. This kind of decomposition was first proven by Enochs in \cite[Theorem]{enochs84} for non-graded flat cotorsion modules over a Noetherian ring \(A\). The theorem of Enochs says that an \(A\)-module \(F\) is flat and cotorsion, if and only if \(F \cong \prod_{\mf p \in \spec A} T_\mf p\), where each \(T_\mf p\) is the \(\mf p\)-adic completion of a free \(A_\mf p\)-module.

Our proof is heavily inspired by the proof of Enochs' decomposition theorem given by Xu in \cite[Section 4.1]{xu96} with two major simplifications. First, we have only finitely many faces instead of possibly infinitely many primes. Second, we can avoid discussing completions entirely, since these completions have a very explicit form in the case of \(\ZZ^n\)-graded \(\kk[x_1,\dots,x_n]\)-modules.

\begin{lemma}\label{lem: flat cotorsion iff double dual splits}
Let \(F\) be a flat persistence module. Then \(F\) is cotorsion, if and only if \(F \to (F^\vee)^\vee\) splits.
\end{lemma}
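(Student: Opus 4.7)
The plan is to handle the two directions separately, with the nontrivial content concentrated in one direction.

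For the easy direction, assume the natural map $\iota \colon F \to (F^\vee)^\vee$ splits, so $F$ is a direct summand of $(F^\vee)^\vee$. By Lemma \ref{lem: Matlis dual cotorsion}, $(F^\vee)^\vee = \underline{\Hom}(F^\vee, E(\kk))$ is cotorsion. Since $\underline{\Ext}^1(G,-)$ respects direct sums, direct summands of cotorsion modules are cotorsion, so $F$ is cotorsion.

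For the harder direction, assume $F$ is flat and cotorsion. First observe that $\iota$ is a monomorphism, since degree-wise it is the canonical embedding of $F_\mathbf{a}$ into its vector-space double dual. Form the short exact sequence
\[0 \to F \xrightarrow{\iota} (F^\vee)^\vee \to C \to 0.\]
If I can show $C$ is flat, then because $F$ is cotorsion, $\underline{\Ext}^1(C,F)=0$, which forces the sequence to split and finishes the proof.

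The key step will be proving $C$ is flat, and I would argue this via a double Matlis dual computation. Since the Matlis duality functor is exact, dualizing gives a short exact sequence
\[0 \to C^\vee \to ((F^\vee)^\vee)^\vee \xrightarrow{\iota^\vee} F^\vee \to 0.\]
The natural map $F^\vee \to ((F^\vee)^\vee)^\vee$ is a right inverse to $\iota^\vee$ by the standard triple-dual identity (naturality of $(-)^\vee$ applied to $\iota$ shows that the composition $F^\vee \to ((F^\vee)^\vee)^\vee \xrightarrow{\iota^\vee} F^\vee$ is the identity). Hence the dualized sequence splits, giving $((F^\vee)^\vee)^\vee \cong F^\vee \oplus C^\vee$. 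Now applying Proposition \ref{prop: flat iff dual inj} twice: $F^\vee$ is injective (as $F$ is flat), so $(F^\vee)^\vee$ is flat, so $((F^\vee)^\vee)^\vee$ is injective. Therefore $C^\vee$ is a direct summand of an injective module and hence injective. Applying Proposition \ref{prop: flat iff dual inj} once more yields that $C$ is flat.

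The main obstacle is really just finding the slick route from ``$F$ cotorsion'' to ``$C$ flat''; there is no obvious direct argument that the cokernel of $\iota$ is flat, but the triple-dual trick turns the question into one about injectivity of $C^\vee$, and there the flat/injective Matlis duality of Proposition \ref{prop: flat iff dual inj} does all the work. Everything else is routine diagram chasing.
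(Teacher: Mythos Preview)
Your proof is correct, and both directions are fine as written. The overall architecture matches the paper's: show that the cokernel $C = (F^\vee)^\vee/F$ is flat, then use $\underline{\Ext}^1(C,F)=0$ to split the sequence. The genuine difference is in how you establish flatness of $C$.

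The paper argues directly with tensor products: it writes down an explicit natural map $(F^\vee)^\vee \otimes M \to ((F\otimes M)^\vee)^\vee$ and observes that $F\otimes M \to (F^\vee)^\vee \otimes M$ factors the always-injective map $F\otimes M \to ((F\otimes M)^\vee)^\vee$, hence is itself a monomorphism; the $\Tor$ long exact sequence (using that $(F^\vee)^\vee$ is flat) then gives $\Tor_1(C,M)=0$ for every $M$. Your route instead dualises the short exact sequence, invokes the triple-dual identity to split off $C^\vee$ as a summand of the injective module $((F^\vee)^\vee)^\vee$, and then appeals to Proposition~\ref{prop: flat iff dual inj} to conclude $C$ is flat. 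Your argument is arguably cleaner in this setting because it reuses the flat/injective duality already proved rather than building a new auxiliary map, and it makes transparent that flatness of $C$ needs only flatness of $F$, not cotorsion. The paper's approach, on the other hand, is closer to the classical ungraded argument of Xu and works without having Proposition~\ref{prop: flat iff dual inj} available in advance. One small narrative point: your phrase ``from `$F$ cotorsion' to `$C$ flat''' slightly overstates things, since your argument for $C$ flat uses only that $F$ is flat; cotorsion enters only at the final splitting step.
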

\begin{proof}
Direct summands of cotorsion persistence modules are cotorsion by Lemma \ref{lem: product of cotorsion is cotorsion}. Since \((F^\vee)^\vee\) is cotorsion by Lemma \ref{lem: Matlis dual cotorsion}, we see that \(F\) is cotorsion if \(F \to (F^\vee)^\vee\) splits.

Assume then that \(F\) is cotorsion. Let \(M\) be any persistence module. We have a morphism \((F^\vee)^\vee \otimes M \to ((F \otimes M)^\vee)^\vee\) given by \(f \otimes m \mapsto (\varphi \mapsto f(\varphi(- \otimes m)))\). With this morphism, the diagram
\[\begin{tikzcd}
F \otimes M \rar\ar{dr} & (F^\vee)^\vee \otimes M \dar \\
& ((F \otimes M)^\vee)^\vee
\end{tikzcd}\]
commutes. Since the morphism \(F \otimes M \to ((F \otimes M)^\vee)^\vee\) is a monomorphism, the morphism \(F \otimes M \to (F^\vee)^\vee \otimes M\) also is a monomorphism. Hence, the exact sequence
\[0 = \Tor_1((F^\vee)^\vee,M) \to \Tor_1((F^\vee)^\vee/F,M) \to F \otimes M \to (F^\vee)^\vee \otimes M\]
shows that \(\Tor_1((F^\vee)^\vee/F,M) = 0\), i.e.~\((F^\vee)^\vee/F\) is flat. Now, the exact sequence
\[\underline{\Hom}((F^\vee)^\vee,F) \to \underline{\Hom}(F,F) \to \underline{\Ext}^1((F^\vee)^\vee/F,F) = 0\]
shows that \(\underline{\Hom}((F^\vee)^\vee,F) \to \underline{\Hom}(F,F)\) is an epimorphism. Hence, \(\id \colon F \to F\) factors through \(F \to (F^\vee)^\vee\), so \(F \to (F^\vee)^\vee\) splits.
\end{proof}

\begin{lemma}
Let \(\sigma\) be a face and \(F \cong \prod_{\mb a \in \ZZ\sigma^\perp} R_\sigma(-\mb a)^{(\beta_\mb a)}\) for some sets \(\beta_\mb a\). If \(F \cong A \oplus B\), then \(A \cong \prod_{\mb a \in \ZZ\sigma^\perp} R_\sigma(-\mb a)^{(\rho_\mb a)}\) for some sets \(\rho_\mb a\).
\end{lemma}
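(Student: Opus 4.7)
The plan is to recognise $A$ as the flat cover of the quotient $A/\mf p_\sigma A$, and to give a direct-sum decomposition of this quotient so that Example \ref{ex: flat cover of Tp} applies and delivers the desired product form for $A$.

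First I would apply Example \ref{ex: flat cover of Tp} to $F$ itself. It tells us that the quotient map $p \colon F \to F/\mf p_\sigma F$ is the flat cover, with
\[F/\mf p_\sigma F \;=\; \prod_{\mb a \in \ZZ\sigma^\perp} \kk(\sigma)(-\mb a)^{(\beta_\mb a)} \;=\; \bigoplus_{\mb a \in \ZZ\sigma^\perp} \kk(\sigma)(-\mb a)^{(\beta_\mb a)},\]
where the product collapses to the direct sum because the supports $\mb a + \ZZ\sigma$ of the modules $\kk(\sigma)(-\mb a)$ lie in pairwise distinct cosets of $\ZZ\sigma$ for different $\mb a \in \ZZ\sigma^\perp$. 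The decomposition $F = A \oplus B$ then gives $F/\mf p_\sigma F = A/\mf p_\sigma A \oplus B/\mf p_\sigma B$, and $p$ splits as the direct sum of the two projections onto these summands. By Lemma \ref{lem: direct sum of flat covers}, $A \to A/\mf p_\sigma A$ is itself the flat cover.

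The harder step is to show that $A/\mf p_\sigma A \cong \bigoplus_{\mb a \in \ZZ\sigma^\perp} \kk(\sigma)(-\mb a)^{(\rho_\mb a)}$ for suitable cardinals $\rho_\mb a$. I would exploit the fact that $\mf p_\sigma$ annihilates $A/\mf p_\sigma A$ and that each $x_i$ with $\mb e_i \in \sigma$ acts invertibly on $F$ (every factor $R_\sigma(-\mb a)$ is an $R_\sigma$-module), hence on $F/\mf p_\sigma F$ and on its direct summand $A/\mf p_\sigma A$. Together these upgrade $A/\mf p_\sigma A$ to a $\ZZ^n$-graded module over the graded ring $\kk(\sigma) = R_\sigma/\mf p_\sigma R_\sigma = \kk[\ZZ\sigma]$. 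Splitting by cosets of $\ZZ\sigma$ in $\ZZ^n$, parametrised canonically by $\mb a \in \ZZ\sigma^\perp$, yields
\[A/\mf p_\sigma A = \bigoplus_{\mb a \in \ZZ\sigma^\perp} (A/\mf p_\sigma A)^{(\mb a)},\]
where $(A/\mf p_\sigma A)^{(\mb a)}$ is the part supported on $\mb a + \ZZ\sigma$. Because every nonzero homogeneous element of $\kk[\ZZ\sigma]$ is a unit, $\kk[\ZZ\sigma]$ is a $\ZZ\sigma$-graded field, so every graded module over it is free; setting $\rho_\mb a = \dim_\kk (A/\mf p_\sigma A)_\mb a$, the $\mb a$-coset part is isomorphic to $\kk(\sigma)(-\mb a)^{(\rho_\mb a)}$.

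A second invocation of Example \ref{ex: flat cover of Tp}, now applied to $A/\mf p_\sigma A$, identifies its flat cover as $\prod_{\mb a \in \ZZ\sigma^\perp} R_\sigma(-\mb a)^{(\rho_\mb a)}$; combined with the first step, this yields $A \cong \prod_{\mb a \in \ZZ\sigma^\perp} R_\sigma(-\mb a)^{(\rho_\mb a)}$. The main obstacle is the middle step, classifying the summand $A/\mf p_\sigma A$ inside a potentially enormous direct sum of shifts of $\kk(\sigma)$. The point that makes it tractable is that $\kk(\sigma) = \kk[\ZZ\sigma]$ is a graded field, so the classification of its graded modules reduces to a single cardinal on each $\ZZ\sigma$-coset.
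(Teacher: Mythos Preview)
Your proof is correct and follows essentially the same route as the paper: both use Example~\ref{ex: flat cover of Tp} and Lemma~\ref{lem: direct sum of flat covers} to identify $A \to A/\mf p_\sigma A$ as a flat cover, observe that $A/\mf p_\sigma A$ is a graded $\kk(\sigma)$-module and hence free, and then invoke Example~\ref{ex: flat cover of Tp} again to recover the product form of $A$. The only difference is that you spell out the ``graded field'' argument for freeness explicitly, whereas the paper cites \cite[Theorem 1.1.4]{goto78b}.
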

\begin{proof}
In Example \ref{ex: flat cover of Tp} we saw that \(F \to F/\mf p_\sigma F\) is a flat cover. By Lemma \ref{lem: direct sum of flat covers} \(A \to A/\mf p_\sigma A\) is a flat cover as well. Now, since \(A/\mf p_\sigma A\) is a \(\ZZ^n\)-graded \(\kk(\sigma)\)-module, it is free, i.e.~\(A/\mf p_\sigma A \cong \bigoplus_{i \in \Lambda} \kk(\sigma)(-\mb a_i)\); see e.g.~\cite[Theorem 1.1.4]{goto78b}. As flat covers are unique, Example \ref{ex: flat cover of Tp} again shows us that \(A \cong \prod_{\mb a \in \ZZ\sigma^\perp} R_\sigma(-\mb a)^{(\rho_\mb a)}\) for some sets \(\rho_\mb a\).
\end{proof}

\begin{lemma}\label{lem: flat cotorsion decomposition direct summand}
Let \(F \cong \prod_{\sigma,\mb a \in \ZZ\sigma^\perp} R_\sigma(-\mb a)^{(\beta^\sigma_\mb a)}\) for some sets \(\beta^\sigma_\mb a\). If \(F \cong A \oplus B\), then \(A \cong \prod_{\sigma,\mb a \in \ZZ\sigma^\perp} R_\sigma(-\mb a)^{(\rho^\sigma_\mb a)}\) for some sets \(\rho^\sigma_\mb a\).
\end{lemma}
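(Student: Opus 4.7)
My approach is induction on the number of distinct faces $\sigma$ appearing with nonzero multiplicity in the decomposition of $F$; since there are only $2^n$ faces of $\NN^n$, the induction is finite. The base case of one face is the preceding lemma. For the inductive step, with $k \geq 2$ faces in the support, I pick a face $\tau$ maximal (with respect to inclusion) among them.

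The key observation is that colocalization along $\tau$ isolates the $\tau$-component. Using $(R_\sigma(-\mb a))^\tau = R_\sigma(-\mb a)$ when $\tau \subseteq \sigma$ and $0$ otherwise, together with the fact that colocalization commutes with direct products, we have $F^\tau \cong \prod_{\sigma \supseteq \tau,\mb a} R_\sigma(-\mb a)^{(\beta^\sigma_\mb a)}$, which by maximality of $\tau$ equals $\prod_{\mb a} R_\tau(-\mb a)^{(\beta^\tau_\mb a)}$. The natural map $F^\tau \to F$ is componentwise the identity on the summands with $\sigma \supseteq \tau$, so $F^\tau$ embeds as a direct summand of $F$ with complement $G := \prod_{\sigma \neq \tau,\mb a} R_\sigma(-\mb a)^{(\beta^\sigma_\mb a)}$, involving only $k-1$ faces.

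Applying $\underline{\Hom}(R_\tau,-)$ to $F = A \oplus B$ yields $F^\tau = A^\tau \oplus B^\tau$, and the previous lemma gives $A^\tau \cong \prod_{\mb a} R_\tau(-\mb a)^{(\rho^\tau_\mb a)}$ for some sets $\rho^\tau_\mb a$. Next I construct a retraction $r \colon A \to A^\tau$ as the composition $A \hookrightarrow F \twoheadrightarrow F^\tau \twoheadrightarrow A^\tau$, where the middle projection uses $F = F^\tau \oplus G$ and the last uses $F^\tau = A^\tau \oplus B^\tau$. By naturality of colocalization applied to $A \hookrightarrow F$, the natural map $A^\tau \to A$ factors through $F^\tau \hookrightarrow F$ as the inclusion into the first summand of $F^\tau = A^\tau \oplus B^\tau$. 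Therefore $r$ restricts to the identity on $A^\tau$, giving $A \cong A^\tau \oplus A'$ with $A' := \ker r$. The symmetric argument for $B$ yields $B \cong B^\tau \oplus B'$.

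Combining these splittings, $F = A^\tau \oplus B^\tau \oplus A' \oplus B' = F^\tau \oplus (A' \oplus B')$, while simultaneously $F = F^\tau \oplus G$. Hence $A' \oplus B' \cong F/F^\tau \cong G$, so $A'$ is isomorphic to a direct summand of $G$. Since $G$'s decomposition involves only $k-1$ faces, the inductive hypothesis gives $A' \cong \prod_{\sigma \neq \tau,\mb a} R_\sigma(-\mb a)^{(\rho^\sigma_\mb a)}$; combined with the decomposition of $A^\tau$, this yields $A \cong \prod_{\sigma,\mb a \in \ZZ\sigma^\perp} R_\sigma(-\mb a)^{(\rho^\sigma_\mb a)}$. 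The main obstacle is verifying that $r$ restricts to the identity on $A^\tau$, which depends crucially on the maximality of $\tau$ making $F^\tau$ an actual direct summand of $F$ that is compatible with both decompositions.
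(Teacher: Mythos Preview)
Your argument is correct, but it organizes the proof differently from the paper. The paper does not induct: it sets up the descending filtration $F_{\geq k} = \prod_{|\sigma| \geq k,\,\mb a} R_\sigma(-\mb a)^{(\beta^\sigma_\mb a)}$ by face dimension and uses the vanishing $\underline{\Hom}(R_\sigma,R_\tau) = 0$ for $\sigma \not\subseteq \tau$ to show that the idempotent $F \to A \to F$ preserves each $F_{\geq k}$. This yields splittings $A_{\geq k} = A_{\geq k+1} \oplus A_k$ in one pass, and on each associated graded piece $F_k$ the idempotent is block-diagonal across faces of dimension $k$ (again by the same vanishing), reducing directly to the single-face lemma. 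Your inductive peel-off via colocalization along a maximal $\tau$ is essentially the same vanishing fact packaged as $F^\tau$ being the $\tau$-block, with the counit $F^\tau \to F$ serving as the split inclusion. The paper's filtration handles all faces of a given dimension simultaneously and avoids induction; your version is perhaps more transparent about why the retraction exists (naturality of the counit plus the explicit summand structure of $F^\tau$ in $F$), at the cost of the extra inductive bookkeeping. Both buy the same thing: compatibility of the direct-summand idempotent with the face stratification.
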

\begin{proof}
For each face \(\sigma = \langle \mb e_{i_1},\dots, \mb e_{i_k} \rangle\) with \(i_1 < \cdots < i_k\), we set \(\abs \sigma := k\). For each \(k = 0,\dots,n\) we set
\[F_{\geq k} := \prod_{\abs \sigma \geq k,\mb a \in \ZZ\sigma^\perp} R_\sigma(-\mb a)^{(\beta^\sigma_\mb a)} \text{ and } F_k := F_{\geq k}/F_{\geq k+1} = \prod_{\abs \sigma = k,\mb a \in \ZZ\sigma^\perp} R_\sigma(-\mb a)^{(\beta^\sigma_\mb a)}.\]
Since \(\underline{\Hom}(R_\sigma,R_\tau) = 0\) for faces \(\sigma \not\subseteq \tau\), we see that any morphism \(F_{\geq k} \to F\) has to have its image inside \(F_{\geq k}\). Hence, for each \(k\), the composition \(i_k \colon F_{\geq k} \to F \to A \to F \to F_{\geq k}\) satisfies \(i_k \circ i_k = i_k\). Thus \(F_{\geq k} = \im i_k \oplus \ker i_k\). For each \(k\), we set \(A_{\geq k} := \im i_k\), \(A_k := A_{\geq k}/A_{\geq k+1}\) if \(k < n\), and \(A_n := A_{\geq n}\).

The short exact sequences
\[0 \to F_{\geq k+1} \to F_{\geq k} \to F_k \to 0\]
split so their direct summands
\[0 \to A_{\geq k+1} \to A_{\geq k} \to A_k \to 0\]
also split. Thus \(A = A_{\geq 0} \cong \bigoplus_k A_k\).

Again, since \(\underline{\Hom}(R_\sigma,R_\tau) = 0\) for faces \(\sigma \not\subseteq \tau\), we see that each composition \(\varphi_k \colon F_k \to A_k \to F_k\) is a direct sum of morphisms \(\varphi_\sigma \colon \prod_{\mb a \in \ZZ\sigma^\perp} R_\sigma(-\mb a)^{(\beta^\sigma_\mb a)} \to \prod_{\mb a \in \ZZ\sigma^\perp} R_\sigma(-\mb a)^{(\beta^\sigma_\mb a)}\). Thus \(A_k \cong \bigoplus_{\abs \sigma = k} \im \varphi_\sigma\). Since \(\varphi_k \circ \varphi_k = \varphi_k\), we see that \(\varphi_\sigma \circ \varphi_\sigma = \varphi_\sigma\) for each face \(\sigma\). By the previous lemma, \(\im \varphi_\sigma \cong \prod_{\mb a \in \ZZ\sigma^\perp} R_\sigma(-\mb a)^{(\rho^\sigma_\mb a)}\) for some sets \(\rho^\sigma_\mb a\). Finally,
\[A \cong \bigoplus_\sigma \im \varphi_\sigma = \prod_\sigma \im \varphi_\sigma = \prod_{\sigma,\mb a \in \ZZ\sigma^\perp} R_\sigma(-\mb a)^{(\rho^\sigma_\mb a)}.\]
\end{proof}

\begin{thm}\label{thm: flat cotorsion structure}
A persistence module \(F\) is flat and cotorsion, if and only if
\[F \cong \prod_{\sigma,\mb a \in \ZZ\sigma^\perp} R_\sigma(-\mb a)^{(\beta^\sigma_\mb a)}\]
for some sets \(\beta^\sigma_\mb a\). The sets \(\beta^\sigma_\mb a\) are uniquely determined by \(F\).
\end{thm}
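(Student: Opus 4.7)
The plan is to prove both implications and then uniqueness.

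For the $\Leftarrow$ direction, write $F = \prod_\sigma F_\sigma$ with $F_\sigma := \prod_{\mb a \in \ZZ\sigma^\perp} R_\sigma(-\mb a)^{(\beta^\sigma_\mb a)}$. Flatness is immediate: each $R_\sigma(-\mb a)$ is a localization of a free module hence flat; direct sums preserve flatness; and since $R$ is Noetherian so do direct products. For cotorsion, Example \ref{ex: flat cover of Tp} identifies $F_\sigma$ as the flat cover of $M_\sigma := \bigoplus_\mb a \kk(\sigma)(-\mb a)^{(\beta^\sigma_\mb a)}$. The cosets $\mb a + \ZZ\sigma$ for distinct $\mb a \in \ZZ\sigma^\perp$ are disjoint, so this direct sum coincides with the direct product $\prod_\mb a \kk(\sigma)(-\mb a)^{(\beta^\sigma_\mb a)}$, which is a graded direct summand of $\prod_\mb a \kk(\sigma)(-\mb a)^{\beta^\sigma_\mb a}$ (pick a vector-space complement in one degree and propagate via the invertible $\sigma$-action on $\kk(\sigma)$). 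The latter product is a product of PFD cotorsion modules, hence cotorsion by Lemma \ref{lem: product of cotorsion is cotorsion}, so $M_\sigma$ is cotorsion, $F_\sigma$ is cotorsion by Corollary \ref{cor: flat cover is cotorsion}, and $F$ is cotorsion by another application of Lemma \ref{lem: product of cotorsion is cotorsion}.

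For the $\Rightarrow$ direction, Proposition \ref{prop: flat iff dual inj} gives that $F^\vee$ is injective, so it admits the unique decomposition $F^\vee \cong \bigoplus_{\sigma, \mb a \in \ZZ\sigma^\perp} E(R/\mf p_\sigma)(-\mb a)^{(\alpha^\sigma_\mb a)}$. Dualizing once more, using $\underline{\Hom}(\bigoplus X_i, Y) = \prod \underline{\Hom}(X_i, Y)$ together with $E(R/\mf p_\sigma)(-\mb a)^\vee \cong R_\sigma(\mb a)$, I obtain $(F^\vee)^\vee \cong \prod_{\sigma, \mb a} R_\sigma(\mb a)^{\alpha^\sigma_\mb a}$, where the inner power is a direct product. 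The key observation is that $R_\sigma(\mb a)^{\alpha^\sigma_\mb a}$ is isomorphic as a graded $R$-module to $R_\sigma(\mb a)^{(\gamma^\sigma_\mb a)}$ for any set $\gamma^\sigma_\mb a$ with $|\gamma^\sigma_\mb a| = \dim_\kk \kk^{\alpha^\sigma_\mb a}$: both are indicator modules supported on the same upset with the same pointwise $\kk$-dimension and the same (completely determined) $R$-action tying the stalks together. After the relabeling $\mb a \mapsto -\mb a$, the module $(F^\vee)^\vee$ has the theorem's form. By Lemma \ref{lem: flat cotorsion iff double dual splits}, the natural morphism $F \to (F^\vee)^\vee$ splits, so $F$ is a direct summand of a module of the stated form, and Lemma \ref{lem: flat cotorsion decomposition direct summand} then forces $F$ itself to have that form.

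For uniqueness, I recover $|\beta^\sigma_\mb a|$ from $F$ via the functorial invariant $\mu^\sigma_\mb a(F) := \dim_\kk \bigl(F^\sigma/\mf p_\sigma F^\sigma\bigr)_\mb a$. Colocalization $(-)^\sigma = \underline{\Hom}(R_\sigma, -)$ commutes with direct products of persistence modules and vanishes on any summand $R_\tau(-\mb b)^{(\beta)}$ with $\sigma \not\subseteq \tau$ (a projection-to-component argument using $\underline{\Hom}(R_\sigma, R_\tau) = 0$). Quotienting by $\mf p_\sigma$ further kills the contributions with $\tau \supsetneq \sigma$, since in that case some $x_i \in \mf p_\sigma$ is invertible in $R_\tau$ and therefore $\mf p_\sigma R_\tau = R_\tau$. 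What remains is precisely $M_\sigma = \prod_\mb b \kk(\sigma)(-\mb b)^{(\beta^\sigma_\mb b)}$, whose $\mb a$-degree component is $\kk^{(\beta^\sigma_\mb a)}$ for $\mb a \in \ZZ\sigma^\perp$, giving $\mu^\sigma_\mb a(F) = |\beta^\sigma_\mb a|$.

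The main obstacle is the inner-direct-product-to-inner-direct-sum rewriting in the $\Rightarrow$ direction: although intuitively clear from the rigidity of indicator modules, making it precise requires carefully matching pointwise $\kk$-vector-space structures while verifying that the $R$-action transports along the chosen isomorphism. The commutation arguments underlying the uniqueness computation (Hom commutes with products in the second slot; a finitely generated ideal commutes with arbitrary products of modules) are standard but also deserve care.
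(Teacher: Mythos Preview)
Your proof is correct and follows the paper's strategy in the substantive parts: the $\Rightarrow$ direction (dualize to an injective, decompose, dualize back, rewrite the inner product as a direct sum, split off $F$ via Lemma~\ref{lem: flat cotorsion iff double dual splits}, then apply Lemma~\ref{lem: flat cotorsion decomposition direct summand}) and the uniqueness computation via $\dim_\kk\bigl(\kk(\sigma)\otimes F^\sigma\bigr)_\mb a$ are exactly what the paper does.

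The only difference is in the easy $\Leftarrow$ direction. The paper handles flatness and cotorsion in one stroke: it observes that $F$ is a direct summand of $\prod_{\sigma,\mb a} R_\sigma(-\mb a)^{\beta^\sigma_\mb a} \cong \bigl(\bigoplus_{\sigma,\mb a} E(R/\mf p_\sigma)(\mb a)^{(\beta^\sigma_\mb a)}\bigr)^\vee$, which is flat by Proposition~\ref{prop: flat iff dual inj} and cotorsion by Lemma~\ref{lem: Matlis dual cotorsion}. You instead argue flatness via closure of flats under (graded) products over a Noetherian ring, and cotorsion by recognizing each $F_\sigma$ as the flat cover of a cotorsion module (Example~\ref{ex: flat cover of Tp} plus Corollary~\ref{cor: flat cover is cotorsion}). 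Both routes are valid; the paper's is shorter and avoids having to justify that the graded product of flats is flat, while yours is more self-contained in that it does not lean on Matlis duality for this direction.
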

\begin{proof}
First, assume that \(F\) is flat and cotorsion. Since \(F^\vee\) is injective, we have a decomposition \(F^\vee \cong \bigoplus_{\sigma,\mb a \in \ZZ\sigma^\perp} E(R/\mf p_\sigma)(\mb a)^{(\tau_\mb a^\sigma)}\), and thus \((F^\vee)^\vee \cong \prod_{\sigma,\mb a \in \ZZ\sigma^\perp} R_\sigma(-\mb a)^{\tau_\mb a^\sigma}\). For every face \(\sigma\) and \(\mb a \in \ZZ\sigma^\perp\), there exists a set \(\rho^\sigma_\mb a\) such that \(R_\sigma(-\mb a)^{\tau_\mb a^\sigma} \cong R_\sigma(-\mb a)^{(\rho_\mb a^\sigma)}\). By Lemma \ref{lem: flat cotorsion iff double dual splits}, \(F\) is a direct summand of \((F^\vee)^\vee \cong \prod_{\sigma,\mb a \in \ZZ\sigma^\perp} R_\sigma(-\mb a)^{(\rho_\mb a^\sigma)}\). Hence, Lemma \ref{lem: flat cotorsion decomposition direct summand} shows that \(F \cong \prod_{\sigma,\mb a \in \ZZ\sigma^\perp} R_\sigma(-\mb a)^{(\beta^\sigma_\mb a)}\) for some sets \(\beta^\sigma_\mb a\).

Assume then that \(F \cong \prod_{\sigma,\mb a \in \ZZ\sigma^\perp} R_\sigma(-\mb a)^{(\beta^\sigma_\mb a)}\). Since \(R_\sigma(-\mb a)^{(\beta^\sigma_\mb a)}\) is a direct summand of \(R_\sigma(-\mb a)^{\beta^\sigma_\mb a}\), \(F\) is a direct summand of \(\prod_{\sigma,\mb a \in \ZZ\sigma^\perp} R_\sigma(-\mb a)^{\beta^\sigma_\mb a} \cong \big(\bigoplus_{\sigma,\mb a \in \ZZ\sigma^\perp} E(R/\mf p_\sigma)(\mb a)^{(\beta^\sigma_\mb a)}\big)^\vee\), which is flat and cotorsion. Thus \(F\) is flat and cotorsion.

We still need to show that the sets \(\beta^\sigma_\mb a\) are uniquely determined by \(F\). Let \(\tau\) be a face and \(\mb b \in \ZZ\tau^\perp\). Note that
\[F^\tau \cong \big(\prod_{\sigma,\mb a \in \ZZ\sigma^\perp} R_\sigma(-\mb a)^{(\beta^\sigma_\mb a)}\big)^\tau \cong \prod_{\sigma \supseteq \tau,\mb a \in \ZZ\sigma^\perp} R_\sigma(-\mb a)^{(\beta^\sigma_\mb a)}.\]
Now
\[\kk(\tau) \otimes F^\tau \cong F^\tau/\mf p_\tau F^\tau \cong \prod_{\mb a \in \ZZ\tau^\perp} \kk(\tau)(-\mb a)^{(\beta^\tau_\mb a)}.\]
Finally, the cardinality of the set \(\beta^\tau_\mb b\) is equal to \(\dim_\kk (\kk(\tau) \otimes F^\tau)_\mb b\). Thus the sets are uniquely determined by \(F\).
\end{proof}

\begin{rem}
If \(M\) is a cotorsion persistence module, the minimal flat resolution \(F_\bullet(M) \to M \to 0\) consists of flat cotorsion persistence modules by Corollary \ref{cor: flat cover is cotorsion} and Proposition \ref{prop: kernel of flat cover is cotorsion}. Thus, for every \(i\) we have a decomposition \(F_i(M) \cong \prod_{\sigma,\mb a \in \ZZ\sigma^\perp} R_\sigma(-\mb a)^{(\beta^\sigma_{i,\mb a})}\) where the sets \(\beta^\sigma_{i,\mb a}\) are uniquely determined by \(M\).
\end{rem}

\begin{rem}\label{rem: flat precover is cover iff no R_sigma summand inside kernel}
Let \(f \colon F \to M\) be a flat precover such that \(F\) is cotorsion. Now, using the above decomposition theorem, we can rewrite Corollary \ref{cor: flat precover is cover iff no nonzero summand in kernel} as follows: \(f\) is the flat cover, if and only if there is no direct summand \(R_\sigma(-\mb a) \subseteq F\) such that \(R_\sigma(-\mb a) \subseteq \ker f\).
\end{rem}

In the proof of the previous theorem, we proved the following.

\begin{cor}\label{cor: coloc preserves flat cotorsion}
Let \(F\) be a flat cotorsion persistence module and \(\sigma\) a face. Then \(F^\sigma\) is flat and cotorsion.
\end{cor}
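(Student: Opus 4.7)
The plan is to observe, as the preceding remark suggests, that this is essentially contained in the uniqueness argument of Theorem~\ref{thm: flat cotorsion structure}: apply the structure theorem to $F$, then check that colocalization preserves the form it gives.

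First, I would apply Theorem~\ref{thm: flat cotorsion structure} to write $F \cong \prod_{\tau,\mathbf{a} \in \ZZ\tau^\perp} R_\tau(-\mathbf{a})^{(\beta^\tau_\mathbf{a})}$ for some sets $\beta^\tau_\mathbf{a}$. Since $(-)^\sigma = \underline{\Hom}(R_\sigma,-)$ is a right adjoint (to tensor product with $R_\sigma$, via the tensor--hom adjunction), it preserves arbitrary products, so $F^\sigma \cong \prod_{\tau,\mathbf{a}} \bigl(R_\tau(-\mathbf{a})^{(\beta^\tau_\mathbf{a})}\bigr)^\sigma$. The preliminaries record that $(R_\tau)^\sigma = R_\tau$ if $\sigma \subseteq \tau$ and $0$ otherwise, and the same holds for $R_\tau(-\mathbf{a})$ by shifting. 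To extend this to the direct sum $R_\tau(-\mathbf{a})^{(\beta)}$, I would note that a graded morphism $R_\sigma \to R_\tau(-\mathbf{a})^{(\beta)}$ is determined by the image of the generator $1 \in (R_\sigma)_{\mathbf{0}}$, which lies in a single graded component of the target and hence has finite support, so the morphism factors through finitely many summands; this yields $\bigl(R_\tau(-\mathbf{a})^{(\beta)}\bigr)^\sigma \cong R_\tau(-\mathbf{a})^{(\beta)}$ when $\sigma \subseteq \tau$ and $0$ otherwise.

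Putting these pieces together gives $F^\sigma \cong \prod_{\tau \supseteq \sigma,\,\mathbf{a} \in \ZZ\tau^\perp} R_\tau(-\mathbf{a})^{(\beta^\tau_\mathbf{a})}$, which is still of the form provided by Theorem~\ref{thm: flat cotorsion structure}, hence flat and cotorsion. The only subtle point in the whole argument is the commutation of $\underline{\Hom}(R_\sigma,-)$ with the inner direct sums: this fails for right adjoints in general, but holds here because each graded component of $R_\tau(-\mathbf{a})$ is at most one-dimensional over $\kk$, so a graded morphism out of the effectively cyclic module $R_\sigma$ can reach only finitely many summands at a time.
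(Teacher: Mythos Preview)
Your argument is correct and matches the paper exactly: the paper simply notes that this was already shown in the uniqueness part of Theorem~\ref{thm: flat cotorsion structure}, via the computation $F^\sigma \cong \prod_{\tau \supseteq \sigma,\,\mathbf a \in \ZZ\tau^\perp} R_\tau(-\mathbf a)^{(\beta^\tau_\mathbf a)}$. Your separate justification for commuting $(-)^\sigma$ with the inner direct sums is not needed, since the preliminaries already record $(G_\tau)^\sigma = G_\tau$ or $0$ for \emph{any} free $G$, and $R_\tau(-\mathbf a)^{(\beta)} = (R(-\mathbf a)^{(\beta)})_\tau$ with $R(-\mathbf a)^{(\beta)}$ free.
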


\begin{lemma}\label{lem: loc and coloc preserve hulls and covers}
Let \(M\) be a cotorsion persistence module, \(f \colon F(M) \to M\) the flat cover of \(M\), and \(\sigma\) a face. Then, the morphism \(f^\sigma \colon F(M)^\sigma \to M^\sigma\) is the flat cover.
\end{lemma}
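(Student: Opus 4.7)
The plan is to verify the three defining properties of a flat cover for \(f^\sigma\) in turn, deferring the minimality check to a comparison against the fact that \(f\) itself is a flat cover. First I would note that \(F(M)^\sigma\) is flat and cotorsion: \(F(M)\) is cotorsion by Corollary \ref{cor: flat cover is cotorsion}, and then Corollary \ref{cor: coloc preserves flat cotorsion} applies. Next, Proposition \ref{prop: kernel of flat cover is cotorsion} gives that \(\ker f\) is cotorsion, so Lemma \ref{lem: coloc preserves short exact sequence if ker cotorsion} yields a short exact sequence
\[0 \to (\ker f)^\sigma \to F(M)^\sigma \xrightarrow{f^\sigma} M^\sigma \to 0,\]
with \((\ker f)^\sigma\) cotorsion by Lemma \ref{lem: coloc preserves cotorsion}. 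Lemma \ref{lem: epimorphism flat precover when ker cotorsion} then shows that \(f^\sigma\) is a flat precover.

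The main step is promoting \(f^\sigma\) from precover to cover. Since \(F(M)^\sigma\) is flat cotorsion, Remark \ref{rem: flat precover is cover iff no R_sigma summand inside kernel} is applicable: it suffices to show that no direct summand of the form \(R_\tau(-\mb a) \subseteq F(M)^\sigma\) is contained in \(\ker f^\sigma\). The idea is to use the natural counit \(\epsilon \colon (-)^\sigma \to \id\) (restriction along the inclusion \(R \hookrightarrow R_\sigma\)) to transport such a summand back up to \(F(M)\), where it would contradict the fact that \(f\) is a flat cover. Write \(F(M) \cong \prod_{\tau', \mb a'} R_{\tau'}(-\mb a')^{(\beta^{\tau'}_{\mb a'})}\) using Theorem \ref{thm: flat cotorsion structure}. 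Because \(\underline{\Hom}(R_\sigma,-)\) commutes with direct products, and because a direct calculation yields \(\underline{\Hom}(R_\sigma, R_{\tau'}(-\mb a')^{(X)}) = R_{\tau'}(-\mb a')^{(X)}\) with \(\epsilon\) the identity when \(\tau' \supseteq \sigma\), and \(\underline{\Hom}(R_\sigma, R_{\tau'}(-\mb a')^{(X)}) = 0\) when \(\tau' \not\supseteq \sigma\), one concludes that \(\epsilon_{F(M)} \colon F(M)^\sigma \to F(M)\) embeds \(F(M)^\sigma\) onto the subproduct indexed by the faces \(\tau' \supseteq \sigma\), and that this subproduct is a direct summand of \(F(M)\).

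Given a hypothetical direct summand \(R_\tau(-\mb a) \subseteq F(M)^\sigma\) lying in \(\ker f^\sigma\), the composition with \(\epsilon_{F(M)}\) realizes \(R_\tau(-\mb a)\) as a direct summand of \(F(M)\) (a direct summand of a direct summand). The naturality square \(f \circ \epsilon_{F(M)} = \epsilon_M \circ f^\sigma\) then shows that the composition \(R_\tau(-\mb a) \hookrightarrow F(M) \xrightarrow{f} M\) factors through \(\ker f^\sigma \hookrightarrow M^\sigma \xrightarrow{\epsilon_M} M\), and therefore vanishes; hence \(R_\tau(-\mb a) \subseteq \ker f\). Applying Remark \ref{rem: flat precover is cover iff no R_sigma summand inside kernel} to the flat cover \(f\) forces \(R_\tau(-\mb a) = 0\), the desired contradiction.

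The main obstacle is the explicit identification of \(\epsilon_{F(M)}\) as the inclusion of a direct summand of \(F(M)\) picking out precisely the \(\tau' \supseteq \sigma\) part of the flat cotorsion decomposition. This is a routine but slightly delicate verification at the level of the decomposition, resting on the observation that every element of \(R_{\tau'}(-\mb a')^{(X)}\) is automatically infinitely divisible by each \(x_i\) with \(\mb e_i \in \sigma\) if and only if \(\sigma \subseteq \tau'\).
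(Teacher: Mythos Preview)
Your proof is correct and follows essentially the same approach as the paper: both establish the precover property via the same chain of lemmas, and both prove minimality by observing that the counit \(F(M)^\sigma \to F(M)\) is the inclusion of a direct summand (read off from the flat cotorsion decomposition of \(F(M)\)), then transporting a hypothetical offending summand of \(\ker f^\sigma\) back to \(\ker f\) to contradict that \(f\) is a flat cover. The only cosmetic difference is that the paper invokes Corollary~\ref{cor: flat precover is cover iff no nonzero summand in kernel} (an arbitrary nonzero summand) rather than Remark~\ref{rem: flat precover is cover iff no R_sigma summand inside kernel}, and simply asserts that \(F(M)^\sigma \hookrightarrow F(M)\) is a split monomorphism, whereas you spell out the computation.
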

\begin{proof}
Since \(\ker f\) is cotorsion by Proposition \ref{prop: kernel of flat cover is cotorsion}, the morphism \(f^\sigma\) is an epimorphism by Lemma \ref{lem: coloc preserves short exact sequence if ker cotorsion}. Also, \(F(M)^\sigma\) is flat by Corollary \ref{cor: coloc preserves flat cotorsion} and \(\ker f^\sigma = (\ker f)^\sigma\) is cotorsion by Lemma \ref{lem: coloc preserves cotorsion}. Thus \(f^\sigma\) is a flat precover by Lemma \ref{lem: epimorphism flat precover when ker cotorsion}.

If \(f^\sigma\) is not a flat cover, then by Corollary \ref{cor: flat precover is cover iff no nonzero summand in kernel} there exists a direct summand \(0 \neq F \subseteq F(M)^\sigma\) such that \(f^\sigma(F) = 0\). Consider the following commutative diagram
\[\begin{tikzcd}
F(M)^\sigma \rar\dar & M^\sigma \dar \\
F(M) \rar & M
\end{tikzcd}\]
Since \(F(M)\) is flat and cotorsion, the morphism \(F(M)^\sigma \to F(M)\) is an embedding of a direct summand. Thus it also embeds \(F\) as a direct summand of \(F(M)\) such that \(f(F) = 0\). This is a contradiction as \(f\) is a flat cover. Hence \(f^\sigma\) is a flat cover.
\end{proof}

\begin{cor}\label{cor: loc and coloc preserve min resolutions}
Let \(M\) be a cotorsion persistence module and \(F_\bullet(M) \to M \to 0\) the minimal flat resolution. Then \(F_\bullet(M)^\sigma \to M^\sigma \to 0\) is the minimal flat resolution.
\end{cor}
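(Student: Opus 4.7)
The plan is to split the minimal flat resolution into short exact sequences, colocalize each one, and then verify that the colocalized sequence is exact and that each syzygy map is still a flat cover. Concretely, I set $K_0 = M$ and $K_i = \im d_i$ for $i \geq 1$, so the minimal flat resolution of $M$ is assembled from the short exact sequences
\[ 0 \to K_{i+1} \to F_i(M) \to K_i \to 0 \qquad (i \geq 0). \]

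First I would show by induction on $i$ that each $K_i$ is cotorsion. The base case $K_0 = M$ holds by hypothesis. For the inductive step, $K_{i+1} = \ker\bigl(F_i(M) \to K_i\bigr)$ is the kernel of a flat cover, hence cotorsion by Proposition \ref{prop: kernel of flat cover is cotorsion}. With this in hand, each of the short exact sequences above has cotorsion kernel, so Lemma \ref{lem: coloc preserves short exact sequence if ker cotorsion} gives a short exact sequence
\[ 0 \to K_{i+1}^\sigma \to F_i(M)^\sigma \to K_i^\sigma \to 0. \]
Splicing these yields an exact complex $F_\bullet(M)^\sigma \to M^\sigma \to 0$, and moreover identifies $\im(d_i^\sigma) = K_i^\sigma$ for every $i$.

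It remains to check minimality. Since each $K_i$ is cotorsion and $F_i(M) \to K_i$ is its flat cover, Lemma \ref{lem: loc and coloc preserve hulls and covers} tells us that $F_i(M)^\sigma \to K_i^\sigma = \im(d_i^\sigma)$ is a flat cover. Thus $F_\bullet(M)^\sigma \to M^\sigma \to 0$ satisfies the defining property of the minimal flat resolution, and since minimal flat resolutions are unique up to isomorphism, this is the minimal flat resolution of $M^\sigma$ (which is itself cotorsion by Lemma \ref{lem: coloc preserves cotorsion}).

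The only substantive obstacle is the exactness step, since colocalization is merely left exact in general; the mechanism that rescues us is precisely the cotorsion vanishing in Lemma \ref{lem: coloc preserves short exact sequence if ker cotorsion}, which is available here because every syzygy of a cotorsion module along a minimal flat resolution is again cotorsion. Once exactness is secured, the identification of flat covers is an immediate application of Lemma \ref{lem: loc and coloc preserve hulls and covers}.
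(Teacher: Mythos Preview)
Your proposal is correct and is exactly the natural unpacking of the paper's implicit argument: the corollary is stated without proof immediately after Lemma~\ref{lem: loc and coloc preserve hulls and covers}, and your syzygy-by-syzygy application of Proposition~\ref{prop: kernel of flat cover is cotorsion}, Lemma~\ref{lem: coloc preserves short exact sequence if ker cotorsion}, and Lemma~\ref{lem: loc and coloc preserve hulls and covers} is precisely how one fills in the details.
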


\section{Duality between injective hulls and flat covers}
\label{sec: Duality between injcetive hulls and flat covers}

\subsection{Generator and cogenerator functors}\label{subsec: Generator and cogenerator functors}

\begin{defn}
For all faces \(\sigma\), we define the \emph{generator functor along} \(\sigma\),
\[\Topp_\sigma \colon \ZZ^n\text{-}\grRMod \to \ZZ^n\text{-}\grRMod, \Topp_\sigma M = \kk(\sigma) \otimes M^\sigma,\]
and the \emph{cogenerator functor along} \(\sigma\),
\[\Soc_\sigma \colon \ZZ^n\text{-}\grRMod \to \ZZ^n\text{-}\grRMod, \Soc_\sigma M = \underline{\Hom}(\kk(\sigma),M_\sigma).\]
\end{defn}

\begin{rem}
These functors are named after the related functors of Miller in \cite{miller20}: the closed generator functor \cite[Definition 11.17]{miller20} and the global closed cogenerator functor \cite[Definition 4.15.1]{miller20}.
\end{rem}

Let \(M\) be a persistence module, \(E\) an injective persistence module, and \(i \colon M \to E\) a monomorphism. It is known that \(i\) is an injective hull, if and only if \(\Soc_\sigma i\) is an isomorphism for all faces \(\sigma\) (a reader unfamiliar with this can see this by dualizing the proof of the next proposition). The functors \(\Topp_\sigma\) dually detect flat covers and this has been proven in the non-graded setting. The left-to-right implication was first proven by Enochs and Xu in the proof of \cite[Theorem 2.2]{enochs97}, and the right-to-left implication was proven by Dailey in his thesis \cite[Proposition 4.2.7]{dailey16}. For completeness, we give a version of this proof adapted to persistence modules.

\begin{prop}[Graded version of {\cite[Proposition 4.2.7]{dailey16}}]\label{prop: flat cover iff top isomorphisms}
Let \(M\) be a cotorsion persistence module, \(F\) a flat persistence module, and \(f \colon F \to M\) an epimorphism such that \(\ker f\) is cotorsion. Then, \(f\) is the flat cover, if and only if \(\Topp_\sigma f\) is an isomorphism for all faces \(\sigma\).
\end{prop}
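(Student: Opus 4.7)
The plan is to prove the two implications separately, leveraging the structure theorem for flat cotorsion modules (Theorem \ref{thm: flat cotorsion structure}) and the refined criterion in Remark \ref{rem: flat precover is cover iff no R_sigma summand inside kernel} as the main tools. Before starting either direction I will observe that $F$ itself is cotorsion under the hypotheses: the long exact sequence of $\underline{\Ext}^1(F',-)$ applied to $0 \to \ker f \to F \to M \to 0$ sandwiches $\underline{\Ext}^1(F',F)$ between $\underline{\Ext}^1(F',\ker f) = 0$ and $\underline{\Ext}^1(F',M) = 0$ for every flat $F'$. Hence $F$ decomposes as in Theorem \ref{thm: flat cotorsion structure}, and Remark \ref{rem: flat precover is cover iff no R_sigma summand inside kernel} becomes available both for $F$ and, after colocalizing, for $F^\sigma$ via Corollary \ref{cor: coloc preserves flat cotorsion}.

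For the forward direction, assume $f$ is the flat cover. By Lemma \ref{lem: loc and coloc preserve hulls and covers} the colocalization $f^\sigma \colon F^\sigma \to M^\sigma$ is also a flat cover, and by the calculation in the proof of Theorem \ref{thm: flat cotorsion structure} one has $F^\sigma/\mf p_\sigma F^\sigma \cong \prod_{\mb b \in \ZZ\sigma^\perp} \kk(\sigma)(-\mb b)^{(\beta^\sigma_\mb b)}$. Since $\Topp_\sigma f = \kk(\sigma) \otimes f^\sigma$ and tensoring is right exact, surjectivity is automatic, so the real work is to prove $\ker f^\sigma \subseteq \mf p_\sigma F^\sigma$. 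Arguing by contradiction, I will pick a homogeneous $\xi \in (\ker f^\sigma)_{\mb a}$ with $\xi \notin \mf p_\sigma F^\sigma$; its image in $F^\sigma/\mf p_\sigma F^\sigma$ is nonzero, so some coordinate projection $\pi \colon F^\sigma \to R_\sigma(-\mb b)$ onto a summand of the $\tau=\sigma$ block sends $\xi$ to a nonzero element of $(R_\sigma)_{\mb a - \mb b}$ that avoids $\mf p_\sigma R_\sigma$. This forces $\mb a - \mb b \in \ZZ\sigma$ and $\pi(\xi) = c\, x^{\mb a-\mb b}$ with $c \neq 0$. Since $F^\sigma$ is an $R_\sigma$-module and $x^{\mb a-\mb b}$ is a unit in $R_\sigma$, the element $\eta := x^{-(\mb a-\mb b)} \xi$ lies in $(\ker f^\sigma)_{\mb b}$ and $\pi(\eta)$ equals $c$ times the generator of $R_\sigma(-\mb b)$. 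The $R_\sigma$-linear map $R_\sigma(-\mb b) \to F^\sigma$ sending the generator to $\eta$ is then split by $\pi$, exhibiting $R_\sigma(-\mb b)$ as a direct summand of $F^\sigma$ contained in $\ker f^\sigma$, contradicting Remark \ref{rem: flat precover is cover iff no R_sigma summand inside kernel} applied to $f^\sigma$.

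For the reverse direction, Lemma \ref{lem: epimorphism flat precover when ker cotorsion} already gives that $f$ is a flat precover, so by Remark \ref{rem: flat precover is cover iff no R_sigma summand inside kernel} it suffices to rule out any direct summand $R_\tau(-\mb a) \subseteq F$ lying inside $\ker f$. If such a summand existed, write $F = R_\tau(-\mb a) \oplus F'$; additivity of $\Topp_\tau$ gives $\Topp_\tau F = \kk(\tau)(-\mb a) \oplus \Topp_\tau F'$, and $\Topp_\tau f$ would vanish on the nonzero summand $\kk(\tau)(-\mb a)$ because $f$ vanishes on $R_\tau(-\mb a)$. This contradicts the assumption that $\Topp_\tau f$ is an isomorphism.

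The main obstacle is the rescaling step in the forward direction: one must exploit the $R_\sigma$-module structure on $F^\sigma$ to divide by the unit $x^{\mb a-\mb b}$, thereby moving the offending kernel element from degree $\mb a$ down to degree $\mb b$ so that the coordinate projection actually splits a copy of $R_\sigma(-\mb b)$ inside $F^\sigma$. Once this bookkeeping is in place, both directions collapse into clean applications of Remark \ref{rem: flat precover is cover iff no R_sigma summand inside kernel}.
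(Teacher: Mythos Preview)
Your proof is correct and follows the same overall architecture as the paper's: first observe that $F$ is cotorsion so that Theorem~\ref{thm: flat cotorsion structure} and Remark~\ref{rem: flat precover is cover iff no R_sigma summand inside kernel} apply, then handle the two directions by locating (or excluding) a summand $R_\sigma(-\mb b)$ inside the kernel. Your reverse direction is essentially identical to the paper's.

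The forward direction differs in implementation. The paper introduces the flat cover $G \to \ker f$, passes to the exact sequence $\Topp_\sigma G \to \Topp_\sigma F \to \Topp_\sigma M \to 0$, and chases morphisms: a nonzero $\Topp_\sigma G \to \Topp_\sigma F$ yields a map $R_\sigma(\mb b) \to G^\sigma$ whose composite with a coordinate projection of $F^\sigma$ is an isomorphism, producing the offending summand in $\ker f^\sigma$. You bypass $G$ entirely and work elementwise: pick $\xi \in \ker f^\sigma \setminus \mf p_\sigma F^\sigma$, project onto a coordinate $R_\sigma(-\mb b)$, and rescale by the unit $x^{\mb a-\mb b}$ to split off $R_\sigma(-\mb b)$ inside $\ker f^\sigma$. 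Your version is more elementary and slightly shorter, since it avoids the auxiliary object $G$; the paper's version is more uniformly categorical and makes the role of the exact sequence $\Topp_\sigma G \to \Topp_\sigma F \to \Topp_\sigma M \to 0$ explicit. Both arrive at the same contradiction via Remark~\ref{rem: flat precover is cover iff no R_sigma summand inside kernel} (equivalently Corollary~\ref{cor: flat precover is cover iff no nonzero summand in kernel}) applied to $f^\sigma$, together with Lemma~\ref{lem: loc and coloc preserve hulls and covers}.
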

\begin{proof}
Note first that \(F\) is cotorsion, since \(M\) and \(\ker f\) are. Also, Lemma \ref{lem: epimorphism flat precover when ker cotorsion} shows that \(f\) is a flat precover since \(\ker f\) is cotorsion. Let \(G \to \ker f\) be the flat cover of \(\ker f\). By Lemma \ref{lem: coloc preserves short exact sequence if ker cotorsion}, the sequence
\[G^\sigma \to F^\sigma \to M^\sigma \to 0\]
is exact. After tensoring with \(\kk(\sigma)\), we get the exact sequence
\[\Topp_\sigma G \to \Topp_\sigma F \to \Topp_\sigma M \to 0.\]

Assume first that \(\Topp_\sigma f\) is not an isomorphism. The morphism \(\Topp_\sigma G \to \Topp_\sigma F\) is then non-zero. Since \(F\) is cotorsion, we have a direct summand \(R_\sigma(\mb a) \subseteq F\) by Theorem \ref{thm: flat cotorsion structure} such that the composition \(G^\sigma \to \Topp_\sigma G \to \Topp_\sigma F \to \Topp R_\sigma(\mb a) = \kk(\sigma)(\mb a)\) is non-zero. Further, we get a morphism \(R_\sigma(\mb b) \to G^\sigma\) such that the composition \(R_\sigma(\mb b) \to \kk(\sigma)(\mb a)\) is non-zero. This morphism must be an epimorphism and thus a flat cover by Remark \ref{rem: flat cover and inj hull defns agree with Miller's defn}. Now, we have the commutative diagram
\[\begin{tikzcd}
R_\sigma(\mb b) \rar & G^\sigma \rar \dar & F^\sigma \dar \\
& \Topp_\sigma G \rar & \Topp_\sigma F \rar & \kk(\sigma)(\mb a)
\end{tikzcd}\]
Since the flat cover \(R_\sigma(\mb b) \to \kk(\sigma)(\mb a)\) factors through \(F^\sigma \to \kk(\sigma)(\mb a)\), we get a morphism \(F^\sigma \to R_\sigma(\mb b)\) such that \(R_\sigma(\mb b) \to F^\sigma \to R_\sigma(\mb b)\) is an isomorphism. Thus, \(R_\sigma(\mb b)\) is a direct summand of \(F^\sigma\) with \(R_\sigma(\mb b) \subseteq \ker f^\sigma\). By Corollary \ref{cor: flat precover is cover iff no nonzero summand in kernel}, the morphism \(f^\sigma\) is not the flat cover. Lemma \ref{lem: loc and coloc preserve hulls and covers} then shows that the morphism \(f\) is not a flat cover.

Assume then that \(f\) is not the flat cover. It is still a flat precover, so \(F \cong F' \oplus R_\sigma(\mb a)\) for some \(\mb a \in \ZZ^n\) and face \(\sigma\) by Remark \ref{rem: flat precover is cover iff no R_sigma summand inside kernel}. Now, the kernel of \(\Topp_\sigma f\) contains the non-zero direct summand \(\kk(\sigma)(\mb a) = \Topp_\sigma R_\sigma(\mb a) \subseteq \Topp_\sigma F\). Hence, \(\Topp_\sigma f\) is not a monomorphism, and of course not an isomorphism.
\end{proof}

\begin{rem}
Assume that \(n > 0\). The assumptions that \(f\) is an epimorphism and \(\ker f\) is cotorsion are required in the previous proposition. For example, consider the natural embedding \(i \colon \bigoplus_{\mb a \in \ZZ^n} R(\mb a) \to \prod_{\mb a \in \ZZ^n} R(\mb a)\). Now, \(\prod_{\mb a \in \ZZ^n} R(\mb a)\) and \(\ker i = 0\) are cotorsion, \(\Topp_\sigma f \colon 0 \to 0\) for all \(\sigma \neq 0\), and
\[\Topp_0 i = \id \colon \bigoplus_{\mb a \in \ZZ^n} \kk(\mb a) \to \prod_{\mb a \in \ZZ^n} \kk(\mb a) = \bigoplus_{\mb a \in \ZZ^n} \kk(\mb a),\]
so the rest of the assumptions of the proposition hold. Still, \(i\) is clearly not an epimorphism, and of course it is not the flat cover.

Then, consider the epimorphism \(p \colon \bigoplus_{\mb a \in \ZZ^n} R(\mb a) \to \bigoplus_{\mb a \in \ZZ^n} \kk(\mb a)\). The persistence module \(\bigoplus_{\mb a \in \ZZ^n} \kk(\mb a)\) is cotorsion, and the morphisms \(\Topp_\sigma p\) are all isomorphisms. Yet \(p\) is not a flat cover since the flat cover is \(\prod_{\mb a \in \ZZ^n} R(\mb a) \to \bigoplus_{\mb a \in \ZZ^n} \kk(\mb a)\) as was shown in Example \ref{ex: flat cover of Tp}.
\end{rem}

\begin{cor}\label{cor: minimal flat resolution iff top vanishes}
Let \(M\) be a persistence module with a resolution
\[\cdots \to F_2 \xrightarrow{d_2} F_1 \xrightarrow{d_1} F_0 \xrightarrow{d_0} M \to 0,\]
where each \(F_i\) is flat and cotorsion. This resolution is the minimal flat resolution, if and only if \(\Topp_\sigma d_i = 0\) for all \(i \geq 1\) and all faces \(\sigma\).
\end{cor}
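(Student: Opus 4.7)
The plan is to combine Proposition~\ref{prop: flat cover iff top isomorphisms} applied at every stage with a short exact-sequence chase that relates $\Topp_\sigma d_{i+1}$ to the comparison map $\Topp_\sigma(\im d_{i+1}) \to \Topp_\sigma F_i$.

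First I set $K_{-1} = M$ and $K_{i-1} = \im d_i$ for $i \geq 1$, and factor $d_i = \iota_{i-1} \circ \bar d_i$ where $\bar d_i \colon F_i \twoheadrightarrow K_{i-1}$ and $\iota_{i-1} \colon K_{i-1} \hookrightarrow F_{i-1}$. The minimality condition is by definition that each $\bar d_i$ is a flat cover. To be able to invoke Proposition~\ref{prop: flat cover iff top isomorphisms} for $\bar d_i$, I first verify that every $K_i$ is cotorsion: this follows from Lemma~\ref{lem: cotorsion iff cotorsion resolution} applied to the truncated flat cotorsion resolution $\cdots \to F_{i+2} \to F_{i+1} \to K_i \to 0$.

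Next, for each $i \geq 0$ the map $\bar d_i$ is an epimorphism from the flat module $F_i$ to the cotorsion module $K_{i-1}$ with cotorsion kernel $K_i$, so Proposition~\ref{prop: flat cover iff top isomorphisms} gives: $\bar d_i$ is the flat cover if and only if $\Topp_\sigma \bar d_i$ is an isomorphism for every face $\sigma$. To analyse $\Topp_\sigma \bar d_i$ I apply colocalization along $\sigma$ to the short exact sequence $0 \to K_i \to F_i \to K_{i-1} \to 0$; since $K_i$ is cotorsion, Lemma~\ref{lem: coloc preserves short exact sequence if ker cotorsion} keeps the sequence exact, and then tensoring with $\kk(\sigma)$ (which is right exact) produces an exact sequence
\[ \Topp_\sigma K_i \xrightarrow{\Topp_\sigma \iota_i^{\mathrm{res}}} \Topp_\sigma F_i \xrightarrow{\Topp_\sigma \bar d_i} \Topp_\sigma K_{i-1} \to 0. \]
In particular $\Topp_\sigma \bar d_i$ is always surjective, so it is an isomorphism if and only if the map $\Topp_\sigma K_i \to \Topp_\sigma F_i$ vanishes.

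To close the argument I identify this vanishing with the vanishing of $\Topp_\sigma d_{i+1}$. Factoring $d_{i+1} = \iota_i \circ \bar d_{i+1}$ and applying the functor $\Topp_\sigma$ yields $\Topp_\sigma d_{i+1} = (\Topp_\sigma \iota_i) \circ (\Topp_\sigma \bar d_{i+1})$. The second factor $\Topp_\sigma \bar d_{i+1}$ is surjective by the previous step (applied with $i$ replaced by $i+1$), hence $\Topp_\sigma d_{i+1} = 0$ if and only if $\Topp_\sigma \iota_i = 0$, which is exactly the vanishing condition above. Putting these equivalences together, each $\bar d_i$ is a flat cover for every $i \geq 0$ if and only if $\Topp_\sigma d_j = 0$ for every $j \geq 1$ and every face $\sigma$, which is the desired criterion.

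The only real subtlety is keeping track of the cotorsion hypothesis on the intermediate kernels so that both Proposition~\ref{prop: flat cover iff top isomorphisms} and Lemma~\ref{lem: coloc preserves short exact sequence if ker cotorsion} are available at each step; once that is granted, the rest is a short right-exactness chase.
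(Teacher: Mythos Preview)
Your argument is correct and follows essentially the same route as the paper's proof: you use Lemma~\ref{lem: cotorsion iff cotorsion resolution} to make all images cotorsion, then Lemma~\ref{lem: coloc preserves short exact sequence if ker cotorsion} together with right-exactness of $\kk(\sigma)\otimes -$ to produce the exact sequences, and finally Proposition~\ref{prop: flat cover iff top isomorphisms} to characterise minimality. The only difference is presentational: the paper splices the two steps into the single exact sequence $\Topp_\sigma F_{i+1}\xrightarrow{\Topp_\sigma d_{i+1}}\Topp_\sigma F_i\to\Topp_\sigma\im d_i\to 0$ and reads off the conclusion directly, whereas you make the factorisation $d_{i+1}=\iota_i\circ\bar d_{i+1}$ explicit and argue separately that $\Topp_\sigma\bar d_{i+1}$ is surjective.
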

\begin{proof}
For each \(i \in \NN\), we have the cotorsion resolution \(\cdots \to F_{i+1} \to F_i \to \im d_i \to 0\) so \(\im d_i\) is cotorsion by Lemma \ref{lem: cotorsion iff cotorsion resolution}. Hence, by Lemma \ref{lem: coloc preserves short exact sequence if ker cotorsion} and the right-exactness of \(\otimes\), the sequences
\[\Topp_\sigma F_{i+1} \xrightarrow{\Topp_\sigma d_{i+1}} \Topp_\sigma F_i \to \Topp_\sigma \im d_i \to 0\]
are exact. By the previous proposition, the resolution is minimal, if and only if \(\Topp_\sigma F_i \to \Topp_\sigma \im d_i\) is an isomorphism for all faces \(\sigma\) and \(i \in \NN\). The exact sequences show that this is equivalent to \(\Topp_\sigma d_{i+1} = 0\) for all faces \(\sigma\) and \(i \in \NN\).
\end{proof}

\subsection{Duality between injective hulls and flat covers}\label{subsec: Duality between injective hulls and flat covers}

\begin{prop}\label{prop: soc top matlis duality}
Let \(\sigma\) be a face. For all persistence modules \(M\) we have a natural isomorphism
\[(\Soc_\sigma M)^\vee \cong \Topp_\sigma M^\vee.\]
\end{prop}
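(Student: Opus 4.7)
The strategy is to reduce the isomorphism to an instance of Lemma \ref{lem: tensor product of hom isom to double hom}, which requires a finitely generated module in its first slot. Since $\kk(\sigma)$ is not finitely generated (its support $\ZZ\sigma$ is unbounded below whenever $\sigma \neq 0$), the key trick will be to replace it throughout by the finitely generated quotient $R/\mf p_\sigma$, exploiting the $R_\sigma$-locality of $M_\sigma$ and of $(M^\vee)^\sigma$.

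First I would rewrite $\Soc_\sigma M$. Because $M_\sigma$ is an $R_\sigma$-module, a morphism $R_\sigma \to M_\sigma$ is determined by the image of $1$, so $\underline{\Hom}(R_\sigma, M_\sigma) \cong M_\sigma$. Combined with the $(R_\sigma \otimes -) \dashv \underline{\Hom}(R_\sigma, -)$ adjunction and the identification $\kk(\sigma) = R_\sigma \otimes R/\mf p_\sigma$, this yields
\[\Soc_\sigma M = \underline{\Hom}(\kk(\sigma), M_\sigma) \cong \underline{\Hom}(R/\mf p_\sigma, \underline{\Hom}(R_\sigma, M_\sigma)) \cong \underline{\Hom}(R/\mf p_\sigma, M_\sigma).\]

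Next I would rewrite $\Topp_\sigma M^\vee$ by means of the tensor-hom adjunction:
\[(M^\vee)^\sigma = \underline{\Hom}(R_\sigma, \underline{\Hom}(M, E(\kk))) \cong \underline{\Hom}(R_\sigma \otimes M, E(\kk)) = (M_\sigma)^\vee.\]
Since $(M_\sigma)^\vee = (M^\vee)^\sigma$ is naturally an $R_\sigma$-module, tensoring it over $R$ with $\kk(\sigma)$ agrees with tensoring with $R/\mf p_\sigma$ (both give $(M_\sigma)^\vee/\mf p_\sigma(M_\sigma)^\vee$), hence
\[\Topp_\sigma M^\vee = \kk(\sigma) \otimes (M^\vee)^\sigma \cong R/\mf p_\sigma \otimes (M_\sigma)^\vee.\]

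Finally I would apply Lemma \ref{lem: tensor product of hom isom to double hom} with the finitely generated module $R/\mf p_\sigma$, the module $M_\sigma$, and the injective $E(\kk)$ to obtain the natural isomorphism
\[\tau \colon R/\mf p_\sigma \otimes (M_\sigma)^\vee \xrightarrow{\sim} \underline{\Hom}\bigl(\underline{\Hom}(R/\mf p_\sigma, M_\sigma), E(\kk)\bigr),\]
which under the two identifications above is precisely $\Topp_\sigma M^\vee \xrightarrow{\sim} (\Soc_\sigma M)^\vee$. Naturality in $M$ is inherited from the naturality of each step. The only real obstacle is this opening replacement of $\kk(\sigma)$ by the finitely generated module $R/\mf p_\sigma$ on both sides so that Lemma \ref{lem: tensor product of hom isom to double hom} becomes applicable; once the $R_\sigma$-locality of $M_\sigma$ and $(M_\sigma)^\vee$ is used to justify this, the remainder is a routine chain of adjunctions.
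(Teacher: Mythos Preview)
Your argument is correct and is essentially the same as the paper's: both replace \(\kk(\sigma)\) by the finitely generated module \(R/\mf p_\sigma\) (which the paper writes as \(\kk[\sigma]\)) using the \(R_\sigma\)-module structure on \(M_\sigma\) and \((M_\sigma)^\vee\), then invoke Lemma~\ref{lem: tensor product of hom isom to double hom} together with the tensor--hom adjunction. The only difference is cosmetic: you rewrite both sides before applying the lemma, whereas the paper chains the isomorphisms starting from \(\Topp_\sigma M^\vee\).
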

\begin{proof}
Using the tensor-hom adjunction, we get
\[\Topp_\sigma M^\vee = \kk(\sigma) \otimes \underline{\Hom}(R_\sigma,\underline{\Hom}(M,E(\kk)) \cong \kk(\sigma) \otimes \underline{\Hom}(M_\sigma,E(\kk)).\]
Since \(\kk(\sigma) = \kk[\sigma]_\sigma\) and \(\underline{\Hom}(M_\sigma,E(\kk))\) is an \(R_\sigma\)-module,
\[\kk(\sigma) \otimes \underline{\Hom}(M_\sigma,E(\kk)) \cong \kk[\sigma] \otimes \underline{\Hom}(M_\sigma,E(\kk)).\]
Since \(\kk[\sigma]\) is finitely presented and \(E(\kk)\) is injective,
\[\kk[\sigma] \otimes \underline{\Hom}(M_\sigma,E(\kk)) \cong \underline{\Hom}(\underline{\Hom}(\kk[\sigma],M_\sigma),E(\kk))\]
(see e.g.~the proof of \cite[Theorem 3.2.11]{enochs11}). Finally, using \(M_\sigma = (M_\sigma)^\sigma\), the tensor-hom adjunction, and \(\kk[\sigma]_\sigma = \kk(\sigma)\),
\[\underline{\Hom}(\underline{\Hom}(\kk[\sigma],M_\sigma),E(\kk)) \cong \underline{\Hom}(\underline{\Hom}(\kk(\sigma),M_\sigma),E(\kk)) = (\Soc_\sigma M)^\vee.\]
\end{proof}

\begin{thm}\label{thm: matlis duality of injective hulls and flat covers}
Let \(M\) and \(E\) be persistence modules. A morphism \(g \colon M \to E\) is the injective hull, if and only if \(g^\vee\) is the flat cover. Further, let \(F\) be a pointwise finite-dimensional persistence module with a morphism \(f \colon F \to M\). Then, \(f\) is the flat cover, if and only if \(f^\vee\) is the injective hull.
\end{thm}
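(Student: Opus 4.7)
The plan is to combine three tools already assembled in the paper: the characterization of flat covers via the vanishing/isomorphism of $\Topp_\sigma$ from Proposition \ref{prop: flat cover iff top isomorphisms}, its dual for injective hulls via $\Soc_\sigma$ (which the authors note is obtained by dualizing that proof), and the natural Matlis-duality comparison $(\Soc_\sigma M)^\vee \cong \Topp_\sigma M^\vee$ from Proposition \ref{prop: soc top matlis duality}. Everything else is bookkeeping built from exactness of the Matlis functor and Proposition \ref{prop: flat iff dual inj}.

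For the first equivalence, I would first note that $g\colon M\to E$ is a monomorphism if and only if $g^\vee$ is an epimorphism (Matlis duality is exact and faithful), and that $E$ is injective if and only if $E^\vee$ is flat by Proposition \ref{prop: flat iff dual inj}. The cotorsion hypotheses needed to apply Proposition \ref{prop: flat cover iff top isomorphisms} to $g^\vee$ come for free: $M^\vee$ is cotorsion by Lemma \ref{lem: Matlis dual cotorsion}, and when $g$ is a monomorphism, $\ker g^\vee \cong (\coker g)^\vee$ is cotorsion by the same lemma. Under these matched conditions the two characterizations read: $g$ is the injective hull iff $\Soc_\sigma g$ is an isomorphism for every face $\sigma$, while $g^\vee$ is the flat cover iff $\Topp_\sigma g^\vee$ is an isomorphism for every face $\sigma$. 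Proposition \ref{prop: soc top matlis duality} furnishes a natural isomorphism $\Topp_\sigma g^\vee \cong (\Soc_\sigma g)^\vee$, and since Matlis duality preserves and reflects isomorphisms, the two conditions are equivalent.

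For the second equivalence, the pointwise finite-dimensionality of $F$ lets me reduce to the first equivalence applied to the dual morphism $f^\vee\colon M^\vee\to F^\vee$ in the role of $g$. The key observation is that each hypothesis forces $M$ to also be pointwise finite-dimensional: if $f$ is the flat cover it is an epimorphism, so $M$ is a quotient of the pfd module $F$; if $f^\vee$ is the injective hull then $M^\vee$ embeds in the pfd module $F^\vee$, so $M^\vee$ and hence $M$ is pfd. With $F$ and $M$ both pfd, Lemma \ref{lem: pfd iff isomorphic to double Matlis dual} identifies $f$ naturally with $(f^\vee)^\vee$, and the first equivalence applied to $g=f^\vee$ says exactly that $f^\vee$ is the injective hull iff $(f^\vee)^\vee$, i.e.\ $f$, is the flat cover. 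The main piece of care is verifying that the cotorsion assumptions required by Proposition \ref{prop: flat cover iff top isomorphisms} really are automatic on the dualized side; beyond this I do not expect any serious obstacle.
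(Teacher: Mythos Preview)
Your proposal is correct and follows essentially the same route as the paper's own proof: reduce to the case where $E$ is injective and $g$ is a monomorphism, use Proposition~\ref{prop: soc top matlis duality} to match the $\Soc_\sigma$/$\Topp_\sigma$ characterizations, and for the second statement reduce to the first via double dualization after observing that $M$ must be pointwise finite-dimensional. Your identification $\ker g^\vee \cong (\coker g)^\vee$ is in fact cleaner than the paper's phrasing at that step.
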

\begin{proof}
We start with the morphism \(g\). Note first that \(E\) is injective and \(g\) is a monomorphism, if and only if \(E^\vee\) is flat and \(g^\vee\) is an epimorphism. Therefore, we can assume that \(E\) is injective and that \(g\) is a monomorphism. For each face \(\sigma\), the previous proposition gives us the commutative diagram
\[\begin{tikzcd}
(\Soc_\sigma E)^\vee \rar{\cong} \dar[swap]{(\Soc_\sigma g)^\vee} & \Topp_\sigma E^\vee \dar{\Topp_\sigma g^\vee} \\
(\Soc_\sigma M)^\vee \rar{\cong} & \Topp_\sigma M^\vee
\end{tikzcd}\]
Hence, \(\Topp_\sigma g^\vee\) is an isomorphism, if and only if \((\Soc_\sigma g)^\vee\) is an isomorphism, if and only if \(\Soc_\sigma g\) is an isomorphism. Since \(M^\vee\) and \(\ker g^\vee = (\ker g)^\vee\) are cotorsion and \(g^\vee\) is an epimorphism, Proposition \ref{prop: flat cover iff top isomorphisms} shows that \(g^\vee\) is the flat cover, if and only if \(g\) is the injective hull.

For the second claim, note that if \(M\) is not pointwise finite-dimensional, then \(f\) can not be an epimorphism and \(f^\vee\) can not be a monomorphism, so \(f\) is not the flat cover and \(f^\vee\) is not the injective hull. Therefore, we can assume that \(M\) is pointwise finite-dimensional. From the commutative diagram
\[\begin{tikzcd}
F \rar{\cong} \dar[swap]{f} & (F^\vee)^\vee \dar{(f^\vee)^\vee} \\
M \rar{\cong} & (M^\vee)^\vee
\end{tikzcd}\]
we see that \(f\) is the flat cover, if and only if \((f^\vee)^\vee\) is the flat cover. By the previous case, \((f^\vee)^\vee\) is the flat cover, if and only if \(f^\vee\) is the injective hull.
\end{proof}

\begin{cor}\label{cor: matlis duality of minimal resolutions}
Let \(M\) be a persistence module with a resolution \(0 \to M \to E^\bullet\). This is the minimal injective resolution, if and only if \((E^\bullet)^\vee \to M^\vee \to 0\) is the minimal flat resolution. Further, let \(F_\bullet \to M \to 0\) be a resolution, where each \(F_i\) is pointwise finite-dimensional. Then, \(F_\bullet \to M \to 0\) is the minimal flat resolution, if and only if \(0 \to M^\vee \to F_\bullet^\vee\) is the minimal injective resolution.
\end{cor}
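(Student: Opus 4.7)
I will reduce each claim to an iterated application of Theorem \ref{thm: matlis duality of injective hulls and flat covers} at each stage of the given resolution, exploiting the exactness of the Matlis duality functor. The key fact used throughout is that for any morphism \(d \colon A \to B\) of persistence modules, applying \((-)^\vee\) to the canonical epi-mono factorization \(A \twoheadrightarrow \im d \hookrightarrow B\) produces the canonical factorization \(B^\vee \twoheadrightarrow (\im d)^\vee \hookrightarrow A^\vee\) of \(d^\vee\); in particular, the image of \(d^\vee\) in \(A^\vee\) is naturally identified with \((\im d)^\vee\), and the associated epimorphism \(B^\vee \twoheadrightarrow \im(d^\vee)\) is literally the Matlis dual of the inclusion \(\im d \hookrightarrow B\).

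For the first claim, I would write the injective resolution as the collection of short exact sequences \(0 \to \im d^{i-1} \to E^i \to \im d^i \to 0\) for \(i \geq 0\), with the convention \(\im d^{-1} = M\). Minimality of the injective resolution is, by definition, the statement that \(\im d^{i-1} \hookrightarrow E^i\) is an injective hull for every \(i \geq 0\). Dualizing and using the observation above, the corresponding epimorphism \((E^i)^\vee \twoheadrightarrow \im((d^{i-1})^\vee)\) in the dual complex \((E^\bullet)^\vee \to M^\vee \to 0\) is the Matlis dual of \(\im d^{i-1} \hookrightarrow E^i\). The first assertion of Theorem \ref{thm: matlis duality of injective hulls and flat covers} then gives the equivalence between these morphisms being injective hulls and their Matlis duals being flat covers, with no finite-dimensionality hypothesis needed. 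Doing this for every \(i\) converts minimality of the resolution \(E^\bullet\) into minimality of its dual.

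The second claim is strictly dual, but uses the second (pointwise-finite-dimensional) half of Theorem \ref{thm: matlis duality of injective hulls and flat covers}. Writing the flat resolution as short exact sequences \(0 \to \im d_{i+1} \to F_i \to \im d_i \to 0\) (with \(\im d_0 = M\)), and applying the same image-identification to the dualized differential \((d_i)^\vee \colon F_{i-1}^\vee \to F_i^\vee\), minimality, i.e.~each \(F_i \twoheadrightarrow \im d_i\) being a flat cover with source \(F_i\) pointwise finite-dimensional, translates via the second half of the theorem to the inclusion \(\im((d_i)^\vee) \hookrightarrow F_i^\vee\) being an injective hull in the dual complex \(0 \to M^\vee \to F_\bullet^\vee\), and vice versa. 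The pointwise-finite-dimensionality hypothesis on every \(F_i\) is exactly what permits invoking the second half of the theorem in both directions.

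The main obstacle is purely bookkeeping: carefully identifying the images and the maps between them after dualizing and re-indexing, so that the minimality conditions on the two sides line up under duality. The image-identification of the first paragraph handles this uniformly, so no further ideas are required beyond Theorem \ref{thm: matlis duality of injective hulls and flat covers} and the exactness of \((-)^\vee\).
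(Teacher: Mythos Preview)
Your proposal is correct and is exactly the intended argument: the paper states this corollary without proof, relying on the reader to iterate Theorem \ref{thm: matlis duality of injective hulls and flat covers} through the successive images of the resolution, using exactness of \((-)^\vee\) to identify \(\im(d^\vee)\) with \((\im d)^\vee\) at each stage. Your bookkeeping with the epi--mono factorizations and the explicit invocation of the pointwise finite-dimensional hypothesis for the second half are precisely what is needed.
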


\begin{rem}
The latter case of Theorem \ref{thm: matlis duality of injective hulls and flat covers} does not work without the assumption that \(F\) is pointwise finite-dimensional. For example, let \(M = \bigoplus_{\mb a \in \ZZ^n} \kk(\mb a)\). The flat cover of \(M\) is \(F(M) = \prod_{\mb a \in \ZZ^n} R(\mb a)\) and the injective hull is \(E(M) = \bigoplus_{\mb a \in \ZZ^n} E(\kk)(\mb a)\). Now \(F(M)^\vee\) is clearly not the injective hull, as \(E(M) \not\cong F(M)^\vee\). This can be easily seen by noting that \(E(M)_0\) has a countable basis while \((F(M)^\vee)_0\) has an uncountable basis. In short, the Matlis dual of a flat cover is not necessarily an injective hull.

However, if \(f \colon F \to M\) is a flat precover such that \(f^\vee\) is an injective hull, then \(f\) is a flat cover. To prove this, let \(h \colon F \to F\) such that \(fh = f\). Now \(h^\vee f^\vee = f^\vee\) so \(h^\vee\) must be an isomorphism as \(f^\vee\) is an injective hull. Thus \(h\) has to be an isomorphism.
\end{rem}

\begin{ex}
Let \(M\) be the persistent homology of some filtered topological space. The previous corollary shows that the minimal flat resolution of the persistent cohomology \(M^\vee\) is the Matlis dual of the minimal injective resolution of \(M\).

Assume then that \(F(M)\) is pointwise finite-dimensional. Let \(f \colon F(M) \to E(M)\) be the composition of the flat cover and injective hull of \(M\), so \(\im f = M\). The previous theorem shows that \(f^\vee \colon E(M)^\vee \to F(M)^\vee\) is then the composition of the flat cover and injective hull of \(M^\vee\). This of course also works if we switch \(M\) and \(M^\vee\) since if \(F(M^\vee)\) is pointwise finite-dimensional, then so is \(M^\vee\) and thus \(M = (M^\vee)^\vee\).

We can rephrase this in Miller's terminology \cite[Definition 5.12]{miller20b}: if the flat cover of \(M\) (resp.~\(M^\vee\)) is pointwise finite-dimensional, then the minimal flange presentation of \(M^\vee\) (resp.~\(M\)) is the Matlis dual of the minimal flange presentation of \(M\) (resp.~\(M^\vee\)). In the case of single parameter persistent homology, flange presentations are equivalent to barcodes. Thus this generalizes the well known fact that barcodes for persistent homology and cohomology are equal.
\end{ex}


\phantomsection
\addcontentsline{toc}{section}{\refname}

\bibliographystyle{alpha}
\bibliography{tex/references}


\end{document}